\numberwithin{equation}{section}
\newtheorem{thm}{Theorem}[section]
\theoremstyle{definition}
\theoremstyle{remark}
\newtheorem{rem}[thm]{Remark}
\numberwithin{equation}{section}
\newcommand{\commentout}[1]{}
\newcommand{\diag}{\mathrm{diag}}
\begin{document}

\title{A study of the associated linear problem for $q$-$\mathrm{P}_{\mathrm{V}}$}

\author{Christopher M. Ormerod}

\begin{abstract}
We consider the associated linear problem for a $q$-analogue of the fifth Painlev\'e equation ($q$-$\mathrm{P}_{\mathrm{V}}$). We identify a lattice of connection preserving deformations in the space of the connection data for the linear problem with the lattice of translational B\"acklund transformations for $q$-$\mathrm{P}_{\mathrm{V}}$, hence, show all translational B\"acklund transformations possess a Lax pair. We shall show that the big $q$-Laguerre polynomials, and a suitable generalization, solve a special case of the linear problem, hence, find solutions to $q$-$\mathrm{P}_{\mathrm{V}}$ in terms of determinants of Hankel matrices with entries consisting of rational or hypergeometric functions. 
\end{abstract}

\maketitle

\section*{Introduction}

The Painlev\'e equations were isolated in the classification of second order non-autonomous differential equations with no movable singularities except for poles \cite{PainleveProperty}. For each Painlev\'e equation, there exists a system of linear ordinary differential equations with rational coefficients of the form
\begin{equation}\label{diff}
\frac{\mathrm{d}}{\mathrm{d}x}Y(x) = A(x)Y(x),
\end{equation}
such that the Painlev\'e equation arises as necessary conditions so that the associated monodromy representation is preserved \cite{FlaschkaNewell, Garnier, Jimbo:Monodromy2, Jimbo:Monodromy3, Jimbo:Monodromy1}. In addition to the unique continuous isomonodromic deformation of \eqref{diff}, Jimbo and Miwa \cite{Jimbo:Monodromy2} identify a class of discrete isomonodromic deformations that may be interpreted as translations of the monodromy data. These transformations of the linear problem, $Y(x) \to \tilde{Y}(x)$, are induced by left multiplication with a rational matrix, $R(x)$, via
\begin{equation}\label{transformation}
\tilde{Y}(x) = R(x)Y(x).
\end{equation}
The transformed matrix, $\tilde{Y}(x)$, solves
\begin{equation}\label{tilded}
\frac{\mathrm{d}}{\mathrm{d}x}\tilde{Y}(x) = \tilde{A}(x)\tilde{Y}(x),
\end{equation}
where
\begin{equation}\label{diffBacklund}
\tilde{A}(x) = \left( \frac{\mathrm{d}R(x)}{\mathrm{d}x} R^{-1}(x) + R(x)A(x)R(x)^{-1}\right).
\end{equation}
If $\tilde{A}(x)$ is of the same fundamental form as $A(x)$, then the unique continuous isomonodromic deformation of \eqref{tilded} specifies that the entries of $\tilde{A}(x)$ are also solutions to the same Painlev\'e equation, with a possible change in parameters governed by $\tilde{A}(x)$'s monodromy data \cite{Jimbo:Monodromy2}. Therefore, we consider the transformation, \eqref{transformation}, as inducing a B\"acklund transformation. We may consider this transformation as a system of difference equations in the ``tilde'' direction, hence, the resulting B\"acklund transformation may be thought of as the compatibility of a differential-difference system. Jimbo and Miwa state that all the B\"acklund transformations for the Painlev\'e equations arise in this manner \cite{Jimbo:Monodromy2}.

The discrete Painlev\'e equations are second order non-autonomous difference equations that admit the Painlev\'e equations in a continuum limit \cite{Gramani:DiscretePs}. There are three types of discrete Painlev\'e equation; additive, multiplicative and elliptic, which are classified  according to how the parameters evolve \cite{Sakai:Rational}. The discrete Painlev\'e equations are integrable in the sense that they possess a discrete analogue of the Painlev\'e property, known as singularity confinement \cite{Gramani:DPainleveProperty}, and many may be solved via associated linear problems \cite{Hay, Sakai:qP6, Murata2009, Gramani:Isomonodromic}.

Some of the additive discrete Painlev\'e equations arise as translational components of the continuous Painlev\'e equations, hence, \eqref{diff} and \eqref{transformation} give rise to differential difference Lax-pairs for known Lax pairs for discrete Painlev\'e equations \cite{Jimbo:Monodromy2, Jimbo:Monodromy3, Jimbo:Monodromy1}. For discrete Painlev\'e equations of the multiplicative type, where the parameters change by some multiplicative constant, $q \in \mathbb{C}\setminus \{0\}$ and $|q| \neq 1$, the associated linear problems are systems of linear $q$-difference equations \cite{Hay, Sakai:qP6, Murata2009, Gramani:Isomonodromic}. 

For systems of linear $q$-difference equations, represented by
\begin{equation}\label{qdiff}
Y(q x) = A(x)Y(x),
\end{equation}
where $A(x)$ is a rational $m\times m$ matrix, we may associate a connection matrix \cite{Adams, qBirkhoff, Carmichael, Sauloy, vanderPut}. A $q$-analogue of the sixth Painlev\'e equation was found to be equivalent to the conditions necessary for a certain deformation of a system, of the form \eqref{qdiff}, to preserve the associated connection matrix \cite{Sakai:qP6}. This condition is that deformed system is related via \eqref{transformation}, where $\tilde{Y}(x)$ satisfies 
\begin{equation}
\tilde{Y}(qx) = \tilde{A}(x)\tilde{Y}(x),
\end{equation}
and
\[
A(x) \mapsto R(qx)A(x)R(x)^{-1}.
\]
In light of Sakai's framework \cite{Sakai:Rational}, the discrete Painlev\'e equations represent just one translational component of the group of B\"acklund transformations. One should expect that other translational B\"acklund transformations are expressible as connection preserving deformations. The aim of this article is to develop this idea in the context of the associated linear problem for $q$-$\mathrm{P}_{\mathrm{V}}$ found by Murata \cite{Murata2009}. The version of $q$-$\mathrm{P}_{\mathrm{V}}$ chosen is a translational component of a birational representation of an extended affine Weyl group of type $A_4^{(1)}$, which we denote $T_4$, and is given by
\begin{equation}
\label{qPV:geo} T_4 : \left\{ \begin{array}{c  } b_0, b_1 ,b_2\\ b_3, b_4 \end{array} ; f, g \right\} \to \left\{ \begin{array}{c  } \frac{b_0}{q}, b_1 ,b_2\\ b_3, q b_4 \end{array} ; \tilde{f}, \tilde{g} \right\},
\end{equation}
where $q = 1/ b_0b_1 b_2b_3b_4$ and $\tilde{f}$ and $\tilde{g}$ are related to $f$ and $g$ via
\begin{subequations}\label{intro:qP5}
\begin{align}
\label{intro:ftilde}\tilde{f}f =& \frac{b_1b_2}{b_3}\frac{1-g}{(1+b_2g)(1-b_1b_2g)},\\
\label{intro:gtilde}\tilde{g}g =& \frac{b_4b_0}{b_2}\frac{(1-b_3\tilde{f})(1-\tilde{f})}{\tilde{f}(b_4 - q\tilde{f})}.
\end{align}
\end{subequations}
We intend to show that this system is just one example of the group of connection preserving deformations. In doing so, we shall provide a basis for the lattice of connection preserving deformations, and hence, factorize the connection preserving deformation that defines the evolution of $q$-$\mathrm{P}_{\mathrm{V}}$ into more elementary connection preserving deformations. 

Just as one may consider special solutions of the $q$-Painlev\'e equations, we shall consider a special solution of the associated linear problem. It is the second aim of this paper to use techniques recently applied to $q$-$\mathrm{P}_{\mathrm{VI}}$ \cite{OrmerodForresterWitte} to show that the big $q$-Laguerre polynomials satisfy form a vector solution to the associated linear problem for $q$-$\mathrm{P}_{\mathrm{V}}$. Finding this particular solution of the associated linear problem gives rise to a family of special solutions of $q$-$\mathrm{P}_{\mathrm{V}}$, which may be expressed in terms of variables associated with the orthogonal polynomial framework \cite{OrmerodForresterWitte}. These results bear a remarkable similarity to the work of Masuda, who, for a different version of $q$-$\mathrm{P}_\mathrm{V}$, forms determinental type solutions where the entries of the relevant matrix are continuous $q$-Laguerre polynomials \cite{Masuda}. However, this work adopts a significantly different approach to \cite{Masuda}, as this approach is entirely based on the associated linear problem alone.

\section{$q$-calculus and $q$-special functions}

We first introduce some preliminary material before proceeding. The first set of theory we require is the notion of a $q$-calculus, which is an difference analogue of regular calculus. For an extensive reference on the $q$-calculus, see \cite{Quantum}. Just as in the introduction, we fix a $q \in \mathbb{C} \setminus \{0\}$, where we will make the additional assumption that $|q| < 1$. This assumption holds for the remainder of the paper. We consider the $q$-difference operator
\begin{equation}\label{q:diff}
D_{q,x} f(x) = \frac{f(x) - f(qx)}{x(1-q)}.
\end{equation}
The $q$-differential analogues of the multiplication and quotient rule are
\begin{subequations}
\begin{align}
\label{qmultrule}D_{q,x} f(x)g(x) &= f(x)D_{q,x}g(x) + g(qx)D_{q,x}f(x) ,\\
&= g(x)D_{q,x}f(x) - f(qx)D_{q,x}g(x),\nonumber \\ 
\label{qquotrule}D_{q,x} \frac{f(x)}{g(x)} &= \frac{g(x)D_{q,x}f(x) - f(x)D_{q,x}g(x)}{g(x)g(qx)}.
\end{align}
\end{subequations}
If $f$ is continuously differentiable, then
\[
\lim_{q\to 1} D_{q,x} f(x) = \frac{\mathrm{d}}{\mathrm{d}x} f(x)
\]
Associated with the $q$-derivative is the Jackson $q$-integral \cite{Thomae1}, written as
\begin{equation}\label{ortho:Jacksons}
\int_{0}^{z} f(t)\mathrm{d}_q t := z(1-q)\sum_{n=0}^{\infty} f(zq^n)q^n.
\end{equation}
More generally, we write
\[
\int_{a}^{b} f(t)\mathrm{d}_q t = \int_{0}^{a} f(t)\mathrm{d}_q t - \int_{0}^{b} f(t)\mathrm{d}_q t.
\]
We define the $q$-analogue of the Pochhammer symbol as
\begin{subequations}
\begin{align}
\label{q:Pochhammer0} (a;q)_0 &= 1,\\
\label{q:Pochhammerk} (a;q)_k &= \prod_{n=0}^{k-1} (1-aq^{n}),\\
\label{q:Pochhammerinfty} (a;q)_\infty &= \prod_{n=0}^{\infty} (1-aq^{n}).
\end{align}
It is also convenient to define the notation
\begin{equation}
\label{q:Pochhammersequence}(a_1,\ldots,a_n;q)_k = \prod_{m=1}^{n} (a_m;q)_k.
\end{equation}
\end{subequations}

We define the generalized $q$-hypergeometric function as
\begin{equation}\label{ortho:Hypergeometric}
{ }_r \phi_s \left( \begin{array}{ c |} a_1, \ldots, a_r \\ b_1, \ldots, b_s \end{array} \hspace{.1cm} q ; z \right) = \sum_{k=0}^{\infty} \frac{(a_1,\ldots, a_r;q)_k}{(b_1,\ldots, b_s,q;q)_k} (-1)^{(1+s-r)k}q^{(1+s-r){k \choose 2}} z^k.
\end{equation}
which is the generalization of the basic hypergeometric function, or Heine's hypergeometric series, given by
\[
{}_2 \phi_1 \left( \begin{array}{ c |} a, b \\ c \end{array} \hspace{.1cm} q; z \right) = \sum_{k=0}^{\infty} \frac{(a,b;q)_k}{(c,q;q)_k} z^k.
\]
The basic hypergeometric function possesses the following Jackson $q$-integral representation \cite{GasperRahman} 
\begin{equation}\label{hyperint}
{}_2\phi_1 \left(\begin{array}{c |} a , b \\ c \end{array} \hspace{.1cm} q;t\right) = \frac{\left(b, \frac{c}{b};q\right)_\infty}{(1-q)(c,q;q)_\infty}\int_0^1 x^{\log_q b -1} \frac{(xta,xq;q)_\infty}{\left(xt,\frac{xc}{b};q\right)_\infty} \mathrm{d}_q x.
\end{equation}

The famous Jacobi triple product identity \cite{Jacobi69fundamentanova} defines $\theta_q$ to be 
\begin{align*}
\theta_q (x) = \left(-q x,-\frac{1}{x},q;q\right)_\infty = \sum_{k=-\infty}^{\infty} x^k q^{{k \choose 2}},
\end{align*}
which satisfies
\[
\theta_q(q x) = qx \theta_q(x).
\]
We also define the $q$-character to be
\[
e_{q,c} (x) = \frac{\theta_q \left( x \right) \theta_q \left( \frac{1}{c}\right)}{\theta_q \left(\frac{x}{c} \right)},
\]
which satisfies the equations
\begin{align*}
e_{q,c} (qx) &= c e_{q,c}(x),\\
e_{q,qc} (x) &= x e_{q,c}(x).
\end{align*}
These functions are useful in the classification of systems of $q$-difference equations \cite{Sauloy, vanderPut}. 

\section{Connection preserving deformations}

Given a system of the form \eqref{qdiff}, we may consider the solution of a multiple of $Y(x)$ by $q$-characters and $q$-Pochhammer symbols that reduce \eqref{qdiff} to a problem in which $A(x)$ is a polynomial matrix, given by
\[
A(x) = A_0 + A_1x + \ldots + A_mx^m.
\]
We shall assume that $A_0$ and $A_m$ are diagonalizable $N\times N$ matrices. We have two series solutions, $Y_0(x)$ and $Y_\infty(x)$, around $x=0$ and $x= \infty$ respectively. According to the theory of Birkhoff and Carmichael \cite{qBirkhoff, Carmichael}, the series solution around $x=0$ ($x= \infty$) converge if the ratio of the eigenvalues of $A_0$ ($A_m$) are not powers of strictly positive powers of $q$ \cite{qBirkhoff, Carmichael}. This case is known as the regular case, and covers the associated linear problem for $q$-$\mathrm{P}_\mathrm{VI}$ \cite{Sakai:qP6}. This regular case mimics the theory concerning isomonodromic deformations of Fuchsian systems of linear differential equations \cite{Fuchs2, Fuchs1}.

Under the more general theory of Adams \cite{Adams}, these restrictions on the eigenvalues are completely relaxed. One obtains a more general series solution for $Y_0(x)$ and $Y_\infty(x)$. In this paper, we will assume an expansion of the form
\begin{subequations}
\begin{align}
Y_0(x) &= \hat{Y}_0(x) \diag \left( \theta_q\left(\frac{x}{q}\right)^{l_i} \lambda_i \right),\\
Y_\infty(x) &= \hat{Y}_\infty(x) \diag \left( \theta_q\left(\frac{x}{q}\right)^{k_i} \kappa_i \right),
\end{align}
\end{subequations}
where $\hat{Y}_0(x)$ and $\hat{Y}_{\infty}(x)$ represent some series in $x$ around $x=0$ and $x=\infty$ respectively and the $l_i$ and $k_i$ are fixed integers. This expansion does not assume that the eigenvalues of the leading terms in the expansion around $x=\infty$ or $x = 0$ are all non-zero. It is just one step in the full generalization of the series solution expansions of \eqref{qdiff} around $x=0$ and $x= \infty$.

Just as in the theory of monodromy, we consider the series solution around $x= \infty$, $Y_\infty(x)$, as being fundamental in the sense that we may normalize $Y_\infty(x)$ and use a connection matrix to express all other solutions in terms of $Y_\infty(x)$. We may express $Y_0(x)$ in terms of $Y_\infty(x)$, thus, form the connection matrix, given by 
\[
Y_0(x) = Y_\infty(x) P(x),
\]
where $P(x)$ is quasi-periodic in $x$ by definition. It is known that $P(x)$ is a matrix over the field of functions spanned by elements of the
\[
\phi_{c,d,q}(x) = \frac{e_{q,c}(x) e_{q,d}(x)}{e_{q,cd}(x)}
\]
where $c$ and $d$ are constants \cite{Sauloy, vanderPut}. We note that $Y_\infty(x)$ and $Y_0(x)$ are related to the symbolic solutions
\begin{align*}
Y_0(x) &\sim A(x/q) A(x/q^2) \ldots,\\
Y_\infty (x) &\sim A(x)^{-1} A(qx)^{-1} \ldots,
\end{align*}
where $\sim$ denotes an equivalence modulo the conjugation of some set of transformations so that the multiplicative series converges. We parameterize the determinant of $A(x)$ by writing
\[
\det(A(x)) = \kappa \prod_i (x-a_i)
\]
where $\kappa = \prod_i \kappa_i$. From the above expression, $Y_0(x)$ is possibly singular at $q^ja_i$ for all $i$ and $j > 0$. In a similar manner, we may not be able to define $Y_{\infty}(a_i/q^j)^{-1}$ for all $i$ and $j \geq 0$. Hence, $P(x)$ is possibly singular at $\{ q^n a_i \}$. We consider the connection data, $M$, to be 
\begin{equation}\label{connectiodata}
M = \left\{ \begin{array}{c}  a_i \\ \lambda_i, \kappa_i \end{array} \right\}.
\end{equation}
Rather than considering the only connection preserving deformation of interest to be the one that defines the discrete Painlev\'e equation, we wish to explore the space of connection preserving deformations. We propose that associated linear problems possess a set of connection preserving deformations that have some structure that goes hand-in-hand with the B\"acklund transformations.

We now consider transformations of the form \eqref{transformation}. Note that if $R(x)$ is some rational invertible matrix, then
\begin{align*}
\tilde{Y}(qx) &= R(qx)Y(qx),\\
&= R(qx) A(x) Y(x),\\
&= R(qx) A(x) R(x)^{-1} R(x)Y(x),\\
&= \tilde{A}(x) \tilde{Y}(x).
\end{align*}
where
\[
R(qx) A(x) R(x)^{-1} = \tilde{A}(x).
\]
Furthermore, 
\begin{align*}
\tilde{P}(x) &= \tilde{Y}_{\infty}(x)^{-1}\tilde{Y}_0(x), \\
&= Y_\infty(x)^{-1} R(x)^{-1} R(x)Y_0(x) = P(x),
\end{align*}
hence, $A(x)$ and $\tilde{A}(x)$ possess the same connection matrix. The compatibility can be seen when one tries to compute $\tilde{Y}(qx)$, as we require $\tilde{A}(x)\tilde{Y}(x) = R(qx)Y(qx)$, imposing the constraint \cite{Sakai:qP6, Murata2009, Gramani:Isomonodromic, Sakai:Garnier}
\begin{equation}\label{P5:comp}
\tilde{A}(x)R(x) = R(qx) A(x).
\end{equation}
We now endow the set of transformations with some group structure. Take two matrices, $R_1$ and $R_2$, inducing transformations
\begin{align*}
\tilde{Y}(x) &= R_1(x)Y(x),\\
\hat{Y}(x) &= R_2(x)Y(x),
\end{align*}
then,
\begin{equation}\label{multrule}
\hat{\tilde{Y}}(x) = \hat{R}_1(x) R_2(x) Y(x) = \hat{R}_1(x) R_2(x) Y(x).
\end{equation}
The composition of these two connection preserving deformations is represented by \eqref{transformation} where $R(x) = \hat{R}_1(x) R_2(x)$, which, by consistency, must also be $\hat{R}_1(x) R_2(x)$. 

To determine the form of $R(x)$, we are required to examine the determinental constraint. By taking  determinants of \eqref{qdiff}, one finds that $\det Y(x)$ satisfies the scalar equation
\[
\det Y(qx) = \det A(x) \det Y(x).
\]
Using the functions specified in \S 1, it is clear that we may solve this in terms of $q$-exponentials and theta functions. If we know how the connection data changes this determinant, then $\det R(x)$ is a rational solution of
\begin{equation}\label{detR}
\det R(x) = \frac{\det \tilde{Y}}{\det Y}.
\end{equation}
Further information regarding the asymptotics of $R$ may be inferred from
\begin{align*}
R(x) &= \tilde{Y}_{\infty}(x) Y_{\infty}(x)^{-1},\\
 &= \tilde{Y}_{0}(x) Y_{0}(x)^{-1},
 \end{align*}
where use may be made of the known asymptotic forms and the changes to the connection data expected from the connection preserving deformation.

\subsection{Parameterization of $q$-$\mathrm{P}_{\mathrm{V}}$} 

The information one is required to specify to obtain $q$-$\mathrm{P}_{\mathrm{V}}$ is minimal \cite{Murata2009}. From the connection preserving deformation theory, we take a linear problem of the form \eqref{qdiff} where $A$ is a $2\times 2$ polynomial matrix, which we parameterize by letting
\[
A(x) = A_0 + A_1 x + A_2x^2 = (a_{ij}(x))_{i,j=1,2}.
\]
We specify the following properties:
\begin{itemize}
\item{The determinant of $A$ is
\begin{equation}\label{detA}
\det A(x) = \kappa_1\kappa_2(x-a_1)(x-a_2)(x-a_3).
\end{equation}
}
\item{The solution at $x=\infty$ is specified by
\begin{equation}\label{expansionYinf}
Y_\infty(x) = \left( I + \sum_{i> 0} Y_\infty^{(i)}\frac{1}{x^i} \right)\begin{pmatrix} \theta\left(\frac{x}{q}\right) e_{q,\kappa_1}(x) & 0 \\ 0 & \theta\left(\frac{x}{q}\right)^2 e_{q,\kappa_2}(x)\end{pmatrix}.
\end{equation}
}
\item{The solution at $x=0$ is specified by
\begin{equation}\label{expansionY0}
Y_0(x) = \left( C_0 +  \sum_{i> 0} Y_0^{(i)}x^i\right) \begin{pmatrix} e_{q,\lambda_1}(x) & 0 \\ 0 & e_{q,\lambda_2}(x) \end{pmatrix},
\end{equation}
where $C_0$ diagonalizes $A(x)$ at $x=0$.
}
\end{itemize}
We note that the series solution around $0$ is handled by the theory for the regular case \cite{qBirkhoff, Carmichael, Sakai:qP6}, and the form of the solution around $\infty$ is of particular interest, as it deviates from the regular case, and is handled by the theory for irregular case \cite{Adams}. It should be noted that the above asymptotic expansions around $x= \infty$ seems valid for the associated linear problem for cases below $q$-$\mathrm{P}_{\mathrm{V}}$ in Muratas' work \cite{Murata2009}, however, the expansion around $x=0$ for $q$-$\mathrm{P}_\mathrm{V}$ is different to those systems below $q$-$\mathrm{P}_\mathrm{V}$ in the hierarchy. The expansion of solutions for Jimbo and Sakai's associated linear problem for $q$-$\mathrm{P}_{\mathrm{VI}}$ \cite{Sakai:qP6} is different in that we must replace the single factor of $\theta(x/q)$ in \eqref{expansionYinf} with $\theta(x/q)^2$. 

As a $2\times 2$ system with the above properties, this specifies that one of the eigenvalues of $A_2$ is $0$, and the other is $\kappa_2$, meaning that we may specify
\[
A_2 = \begin{pmatrix} 0 & 0 \\ 0 & \kappa_2 \end{pmatrix}.
\]
This forces the top left entry of $A_1$ to be $\kappa_1$. This also constrains the eigenvalues of $A_0$, $\lambda_1$ and $\lambda_2$, to be non-zero, with the additional constraint that
\[
\lambda_1\lambda_2 = -\kappa_1\kappa_2 a_1 a_2 a_3.
\]
This specifies that the connection data is given by
\[
M = \left\{\begin{array}{c c c c} a_1 & a_2 & a_3 & \\ \kappa_1 & \kappa_2 & \lambda_1 & \lambda_2 \end{array}\right\},
\]
with the implicit relation above.

This associated linear problem must be parameterizable in terms of three free variables, $y$, $z$ and $w$. Following the work of previous authors \cite{Murata2009, Sakai:qP6}, the three parameter parameterization is completely specified by the conditions 
\begin{align*}
a_{12}(x) &= w(x-y),\\
a_{11}(y) &= \kappa_1z_1,\\
a_{22}(y) &= \kappa_2z_2,
\end{align*}
where $w$ is a parameter that encapsulates the gauge freedom of the system. Evaluating the determinant at $x= y$ reveals
\[
z_1 z_2 = (y-a_1)(y-a_2)(y-a_3).
\]

We remark that this type of parameterization a theme that arises in the associated linear problems for the associated linear problems for the continuous Painleve equations, $\mathrm{P}_{\mathrm{I}\textrm{-}\mathrm{VI}}$, as listed in the work of Jimbo et al. \cite{Jimbo:Monodromy2} \footnote{Where the parameterization of the associated linear problem for $\mathrm{P}_{\mathrm{I}}$ appears to an exception, however, $0$ of the bottom left entry is an equally valid choice.}. This theme is followed in the parameterization for the associated linear problems for the $q$-Painlev\'e equations \cite{Sakai:qP6, Murata2009}. A general form for this parameterization for our system is
\begin{equation}\label{rep}
A(x) = \begin{pmatrix} \kappa_1(x-y + z_1) & \kappa_2w(x-y) \\
\frac{\kappa_1 (\gamma x + \delta)}{w} & \kappa_2 ((x-\alpha)(x-y) + z_2)
\end{pmatrix}.
\end{equation}
We know 
\[
\mathrm{trace} A(0) = \lambda_1 + \lambda_2
\]
is a linear in $\alpha$, from which we obtain
\begin{subequations}\label{abvals}
\begin{equation}
\alpha = \frac{y \kappa _1-z_1 \kappa _1-z_1 \kappa _2+\lambda _1+\lambda _2}{y
   \kappa _2}.
\end{equation}
Equating the coefficient of $x^2$ from \eqref{rep} with the coefficient of $x^2$ in \eqref{detA} gives
\begin{equation}
\gamma = a_1+a_2+a_3-2 y+z_1-\alpha.
\end{equation}
Equating the $\det A(0)$ from \eqref{rep} with $\lambda_1\lambda_2$ gives
\begin{equation}
\delta = \frac{\kappa_1\kappa_2\left(y-z_1\right) \left(y \alpha +z_1\right)+\lambda _1 \lambda _2}{y \kappa_1\kappa_2}.
\end{equation}
\end{subequations}
Finally, equating $\det A(y)$ from \eqref{rep} with the case when $x=y$ in \eqref{detA} gives
\begin{subequations}\label{zvals}
\begin{align}
z_1 &= \frac{\left(a_1-y\right) \left(a_2-y\right)}{z},\\
z_2 &= \left(y-a_3\right) z.
\end{align}
\end{subequations}
which represents a different parameterization from Murata \cite{Murata2009}. The reasons for this will become apparent later.

Computing the first few $Y_\infty^{(i)}$ and $Y_0^{(i)}$ in \eqref{expansionYinf} and \eqref{expansionY0} reveals
\begin{align}\label{firstterms}
Y_\infty(x) &= \left(\begin{pmatrix} 1 & 0 \\ 0 & 1 \end{pmatrix} + \frac{1}{x} \begin{pmatrix} \frac{q \left(y-z_1+\gamma \right)}{q-1} & q w \\
 -\frac{\gamma  \kappa _1}{w \kappa _2} & \frac{q (y+\alpha )}{q-1} \end{pmatrix} + O\left(\frac{1}{x^2}\right) \right)\begin{pmatrix} \theta\left(\frac{x}{q}\right) e_{q,\kappa_1}(x) & 0 \\ 0 & \theta\left(\frac{x}{q}\right)^2 e_{q,\kappa_2}(x)\end{pmatrix},\\
Y_0(x) &= \left( \begin{pmatrix} \frac{1}{\lambda_1-\lambda_2} & \kappa_2wy \\ \frac{\kappa_1(z_1-y)-\lambda_1}{\kappa_2wy(\lambda_1-\lambda_2)} & \kappa_1(z_1-y) - \lambda_2 \end{pmatrix} + O(x) \right) \begin{pmatrix} e_{q,\lambda_1}(x) & 0 \\ 0 & e_{q,\lambda_1}(x) \end{pmatrix}.
\end{align}
Further terms may be calculated, however, it is difficult to write them in a succinct form.

We are now able to specify which connection preserving deformation gives rise to $q$-$\mathrm{P}_{\mathrm{V}}$
\begin{equation}\label{P5:transformation}
\left\{ \begin{array}{c c c c} a_1 & a_2 & a_3 & \\ \kappa_1 & \kappa_2 & \lambda_1 & \lambda_2 \end{array} ; y,z \right\} \to \left\{ \begin{array}{c c c c} qa_1 & qa_2 & a_3 & \\ \frac{\kappa_1}{q} & \frac{\kappa_2}{q} & \lambda_1 & \lambda_2 \end{array} \tilde{y},\tilde{z}\right\}.
\end{equation}
When we consider this as being induced by a transformation of the form \eqref{transformation}. Since $\kappa_1$ and $\kappa_2$ changed in the transformation, using \eqref{expansionYinf}, 
\begin{equation}\label{firstterm}
R(x) = \tilde{Y}(x)Y(x)^{-1} \sim \frac{1}{x}I
\end{equation}
as $x \to \infty$, and since the $\lambda_1$ and $\lambda_2$ are unchanged in the transformation, 
\[
R(x) = \tilde{Y}(x)Y(x)^{-1} \sim I
\]
as $x \to 0$. We also have that
\[
\det R(x) = \frac{1}{(x-qa_1)(x-qa_2)},
\]
hence, $R(x)$ is a matrix of the form
\begin{equation}\label{Rmat}
R(x) = \frac{xI+ R_0}{(x-q a_1)(x-qa_2)}.
\end{equation}
We parameterize $R_0$ by letting $R_0 = (r_{ij})_{i,j=1,2}$. We shall see that \eqref{P5:comp} specifies both the entries of $R_0$, and the evolution of the entries of $A$ in the tilde direction.

\begin{thm}\label{thm:qP5}
The transformation of connection data given by \eqref{P5:transformation} induces the following rational transformation
\begin{subequations}
\begin{align}
\label{P5:what}\frac{\tilde{w}}{w}&=\frac{q \kappa_1}{\kappa_2} \frac{1}{{\displaystyle \frac{q\kappa _1 }{\kappa_2}}-\tilde{z}},\\
\label{P5:yhat}\tilde{y}y&=a_3\frac{\left(\tilde{z}_n +{\displaystyle \frac{q\lambda_1}{\kappa_2a_3}}\right) \left(\tilde{z}_n+ {\displaystyle\frac{q\lambda_2}{\kappa_2a_3}}\right)}{  \left(\tilde{z} - {\displaystyle\frac{q\kappa_1}{\kappa_2}}\right)},\\
\label{P5:zhat}\tilde{z}z&=\frac{q\kappa_1}{\kappa_2}\frac{\left(y-a_1\right) \left(y-a_2\right)}{\left(y-a_3\right)}.
\end{align}
\end{subequations}
\end{thm}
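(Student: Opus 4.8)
The entire content of the theorem is encoded in the compatibility relation \eqref{P5:comp}, $\tilde{A}(x)R(x) = R(qx)A(x)$, in which $A(x)$ is the parameterized matrix \eqref{rep}, $\tilde{A}(x)$ is the same expression carrying the deformed connection data of \eqref{P5:transformation} together with the unknown deformed variables $\tilde{y},\tilde{z},\tilde{w}$, and $R(x)$ is the rational matrix \eqref{Rmat} with unknown constant matrix $R_0=(r_{ij})$. The plan is to read off every unknown from this single matrix equation. Clearing denominators, by multiplying \eqref{P5:comp} through by $(x-qa_1)(x-qa_2)(x-a_1)(x-a_2)$, turns it into a polynomial identity in $x$ that is degree five in each of the four matrix entries; equating coefficients produces an overdetermined but (by the connection-preserving framework of \S2) consistent system for the $r_{ij}$ and for $\tilde{y},\tilde{z},\tilde{w}$.

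Rather than matching all coefficients blindly, I would first exploit the pole structure, which isolates the unknowns cleanly. The matrix $R(x)$ is singular precisely at $x=qa_1,qa_2$, while $R(qx)$ is singular at $x=a_1,a_2$; since these four points are generically distinct, taking the residue of \eqref{P5:comp} at $x=a_1$ and at $x=a_2$ (where only the right-hand side has a pole) forces
\[
(qa_i I + R_0)A(a_i) = 0, \qquad i=1,2.
\]
Because $\det A(a_i)=0$ by \eqref{detA}, each $A(a_i)$ has rank one, so its column space is a line $\xi_i$ read directly off \eqref{rep}, and the condition above says exactly that $R_0\xi_i=-qa_i\xi_i$. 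Thus $R_0$ is the matrix with eigenvectors $\xi_1,\xi_2$ and eigenvalues $-qa_1,-qa_2$, consistent with $\det(xI+R_0)=(x-qa_1)(x-qa_2)$ forced by \eqref{Rmat}, and this determines $r_{11},r_{12},r_{21},r_{22}$ explicitly as rational functions of $y,z,w$ and the connection data once \eqref{abvals} and \eqref{zvals} are substituted.

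With $R_0$ in hand I would recover the deformed variables from the residues of \eqref{P5:comp} at the complementary poles $x=qa_1,qa_2$, namely $\tilde{A}(qa_i)(qa_iI+R_0)=0$, or equivalently by forming $\tilde{A}(x)=R(qx)A(x)R(x)^{-1}$ directly. Reading the deformed parameterization off the template \eqref{rep}, the quantity $\tilde{y}$ is the root of the $(1,2)$ entry of $\tilde{A}$, $\tilde{w}$ is its leading coefficient, and $\tilde{z}$ follows from the analogue of \eqref{zvals} applied to $\tilde{a}_{11}(\tilde{y})$ and $\tilde{a}_{22}(\tilde{y})$; simplifying these three quantities via \eqref{abvals}--\eqref{zvals} should collapse to \eqref{P5:zhat}, then \eqref{P5:what}, then \eqref{P5:yhat}. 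The main obstacle is purely computational: the residue conditions present $R_0$ as a quotient of many-term polynomials in the eight parameters, and the conjugation $R(qx)A(x)R(x)^{-1}$ must be simplified so that its apparent poles at $x=qa_1,qa_2$ cancel and the output matches the rigid form \eqref{rep}. Verifying that the overdetermined coefficient system is consistent and that these bulky expressions telescope into the compact rational forms \eqref{P5:what}--\eqref{P5:yhat} is where the real effort lies, and it is best organized by using the gauge freedom in $w$ to tame the ratio $\tilde{w}/w$.
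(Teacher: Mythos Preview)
Your proposal is correct and exploits the same compatibility relation \eqref{P5:comp}, but the organization differs from the paper's in a way worth noting. You determine $R_0$ entirely from the residues of \eqref{P5:comp} at $x=a_1,a_2$, phrasing this cleanly as an eigenvector condition $(qa_iI+R_0)\xi_i=0$ that fixes $R_0$ as a rational function of the \emph{untilded} data alone; you then propose to extract $\tilde y,\tilde z,\tilde w$ by forming $\tilde A=R(qx)A(x)R(x)^{-1}$ and reading off the parameterization. The paper instead obtains a first representation of $R_0$ from the leading asymptotics of \eqref{P5:comp} at $x=\infty$, which produces entries written as \emph{differences} of tilded and untilded quantities (their \eqref{P5:B0}); it then computes second and third representations of individual entries $r_{12},r_{22}$ from the residues at $x=qa_1,qa_2$ (purely tilded) and at $x=a_1,a_2$ (purely untilded), and each evolution equation \eqref{P5:what}--\eqref{P5:yhat} falls out by equating two such representations of the same scalar. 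Your route is conceptually tidier and makes the rank-one structure of $A(a_i)$ do the work; the paper's route avoids ever writing down $R_0$ in closed form in one set of variables and instead isolates each of the three scalar relations directly, which keeps the intermediate expressions shorter and sidesteps the cancellation of apparent poles in $R(qx)A(x)R(x)^{-1}$ that you correctly flag as the main computational burden of your approach.
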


\begin{proof}
The simplest way of finding $R_0$ is to determine what values of $r_{ij}$ are required for the leading non-vanishing terms in \eqref{P5:comp} to be zero. This gives us the matrix
\begin{equation}\label{P5:B0}
R_0 = \begin{pmatrix}
\displaystyle \frac{q(\tilde{y}-y+ \tilde{\alpha} - \alpha)}{1-q} & q(\tilde{w}-w) \\ {\displaystyle \frac{\kappa_1}{\kappa_2}\left( \frac{\gamma}{w} - \frac{\tilde{\gamma}}{\tilde{w}}\right)} & \displaystyle-\frac{q(\tilde{y}-y +\tilde{\alpha}- \alpha)}{1-q} + qa_1+qa_2
\end{pmatrix}.
\end{equation}
By equating the residue of the upper right entries of the left and right hand sides of \eqref{P5:comp} at $x=qa_1$ and $x=qa_2$ is equivalent to
\begin{align*}
\frac{\kappa _2 \left(a_1 q+b_{22}\right) \tilde{w} \left(a_1 q-\tilde{y}\right)+b_{12} \kappa _1
   \left(qa_1 -\tilde{y}+\tilde{z}_1\right)}{\left(a_1-a_2\right) q^2}&= 0, \\
\frac{\kappa _2 \left(a_2 q+b_{22}\right) \tilde{w} \left(\tilde{y}-a_2 q\right)+b_{12} \kappa _1
   \left(-qa_2+\tilde{y}-\tilde{z}_1\right)}{\left(a_1-a_2\right) q^2}&= 0,
\end{align*}
in which we may deduce that $r_{12}$ and $r_{22}$ are
\begin{align*}
r_{12}&=\frac{\kappa _2 \tilde{w} \left(\tilde{y}-q a_1\right) \left(\tilde{y}-q a_2\right)}{\kappa_1 \tilde{z}_1},\\
r_{22}&=-\frac{\left(\tilde{y}-q a_1\right) \left(\tilde{y}-q a_2\right)}{\tilde{z}_1}-q \left(a_1+a_2\right)+\tilde{y}.
\end{align*}
Equating the representation of $r_{12}$ directly above with that of \eqref{P5:B0} gives \eqref{P5:what}. Similarly, equating the residues of the upper right entry of \eqref{P5:comp} at $x = a_1$ and $x= a_2$ with $0$ gives
\begin{align*}
r_{12} = -\frac{q w \left(y-a_1\right) \left(y-a_2\right)}{\left(y-a_1\right) \left(y-a_2\right)-z_2}.
\end{align*}
The compatibility of this with \eqref{P5:B0} gives \eqref{P5:zhat}. The leading non-vanishing asymptotics of the lower right entry of \eqref{P5:comp} reveals
\[
r_{22}=\frac{q \left(\tilde{y}+\tilde{\alpha}-y-\alpha+\left(a_1+a_2\right) (1-q)\right)}{q-1}.
\]
A comparison of this and \eqref{P5:B0} gives \eqref{P5:yhat}.
\end{proof}

An important tool in the work of Jimbo and Miwa in the continuous isomonodromic deformation theory \cite{Jimbo:Monodromy2} was to use the expansions of the fundamental solutions to obtain the relevant Lax pair. We wish to extend this method to systems of $q$-difference equations. 
We present another way to obtain $R_0$, we extend the expansion of \eqref{Rmat} around $x= \infty$ to reveal
\[
\begin{pmatrix}
\displaystyle \frac{1}{x}+\frac{q a_1+q a_2+r_{11}}{x^2} & \displaystyle \frac{r_{21}}{x^2} \\
\displaystyle  \frac{r_{21}}{x^2} & \displaystyle \frac{1}{x}+\frac{q a_1+q a_2+r_{22}}{x^2}
 \end{pmatrix} + O\left( \frac{1}{x^3}\right),
\]
then compare this expansion with the large $x$ asymptotics of $\tilde{Y}(x) Y^{-1}(x)$ using \eqref{firstterms}, given by
\[
\begin{pmatrix}
 \displaystyle  \frac{1}{x}+\frac{q a_1+ qa_2}{x^2}+\frac{q (y-\tilde{y}+\alpha-\tilde{\alpha} )}{(q-1) x^2} & \displaystyle \frac{q (\tilde{w}-w)}{x^2} \\
\displaystyle  \frac{(\tilde{w} \gamma -w \tilde{\gamma}) \kappa _1}{w \tilde{w} \kappa_2 x^2} & \displaystyle  \frac{1}{x}+\frac{q (\tilde{y}-y+\tilde{\alpha}-\alpha  )}{(q-1) x^2}
\end{pmatrix} + O\left( \frac{1}{x^3}\right),
\]
which, somewhat remarkably, recovers \eqref{P5:B0}. This new method is one that we will exploit in the coming section.

We may identify \eqref{P5:yhat} and \eqref{P5:zhat} with the evolution of \eqref{intro:qP5} by letting
\begin{subequations}
\begin{align}
y &= -\frac{\lambda_1g}{a_3\kappa_2},\\
z &= -\frac{\lambda_1}{a_3\kappa_2f},
\end{align}
with a correspondence between parameters given by
\begin{equation}
b_0 = \frac{a_3}{a_1}, \,\, b_1 = \frac{a_1}{a_2}, \,\, b_2 = - \frac{a_2\kappa_2}{\lambda_2}, \,\, b_3 = \frac{\lambda_2}{\lambda_1}, \,\, b_4 =  -\frac{\lambda _1}{a_3 q \kappa _1}.
\end{equation}
\end{subequations}

We would like to remark that this choice of connection preserving deformation, where the $\kappa_1, \kappa_2 \to \kappa_1/q, \kappa_2/q$ , is different to that of Murata \cite{Murata2009}, where the eigenvalues, $\lambda_1,\lambda_2 \to q\lambda_1, q\lambda_2$. However, the birational transformation given by \cite{Murata2009} is equivalent to \eqref{P5:yhat} and \eqref{P5:zhat}, hence, we note 
\[
\left\{ \begin{array}{c c c c} a_1 & a_2 & a_3 & \\ \kappa_1 & \kappa_2 & \lambda_1 & \lambda_2 \end{array} ; y,z \right\} \to \left\{ \begin{array}{c c c c} a_1 & a_2 & a_3 & \\ q\kappa_1 & q\kappa_2 & q\lambda_1 & q\lambda_2 \end{array} y,z\right\},
\]
is non-trivial translation of the connection data but represents a trivial element of the B\"acklund transformations on both the set of parameters and the variables $y$ and $z$. 

\subsection{Translations}

It is at this point we depart from the theory most notably established by the previous authors \cite{Jimbo:Monodromy2, Sakai:qP6, Murata2009}. Although connection preserving deformations are well established, previous authors have not fully explored cases of connection preserving deformations apart from those that define the relevant discrete Painlev\'e equation. It is the task of this section to show all translational B\"acklund transformations are induced, on the level of the linear system, by transformations of the form \eqref{transformation}. The translations of the connection data are generated by elements of the form
\begin{align*}
T_{a_i,\lambda_j} : \left\{ \begin{array}{c c c c}  a_1 & a_2 & a_3 & \\ \kappa_1 & \kappa_2 & \lambda_1 & \lambda_2 \end{array}:y,z\right\} &\to \left\{ \begin{array}{c c c c}q^{\delta_{1i}} a_1 & q^{\delta_{2i}}a_2 & q^{\delta_{3i}}a_3   \\ \kappa_1 & \kappa_2 & q^{\delta_{1j}}\lambda_1 & q^{\delta_{2j}}\lambda_2 \end{array} : \tilde{y},\tilde{z} \right\},\\
T_{\kappa_i,\lambda_j} : \left\{ \begin{array}{c c c c} a_1 & a_2 & a_3 & \\ \kappa_1 & \kappa_2 & \lambda_1 & \lambda_2 \end{array}:y,z\right\} &\to \left\{ \begin{array}{c c c c} a_1 & a_2 & a_3 & \\ q^{\delta_{1i}}\kappa_1 & q^{\delta_{2i}}\kappa_2 & q^{\delta_{1j}}\lambda_1 & q^{\delta_{2j}}\lambda_2 \end{array} : \tilde{y},\tilde{z} \right\}.
\end{align*}
We shall denote the matrices that induce these actions by $R_{a_i,\lambda_j}(x)$ and $R_{\kappa_i,\lambda_j}(x)$.

We first start with a set of symmetries of $A(x)$, which must possess the same connection matrix in a trivial manner. These symmetries may be seen to be induced by the matrix $R(x) = I$. These are given by
\begin{align*}
r_0 : 
\left\{ \begin{array}{c c c c} a_1 & a_2 & a_3 & \\ \kappa_1 & \kappa_2 & \lambda_1 & \lambda_2 \end{array} : y,z \right\} &\to \left\{ \begin{array}{c c c c} a_2 & a_1 & a_3 & \\ \kappa_1 & \kappa_2 & \lambda_1 & \lambda_2 \end{array}:y,z \right\}, \\
r_1 : 
\left\{ \begin{array}{c c c c} a_1 & a_2 & a_3 & \\ \kappa_1 & \kappa_2 & \lambda_1 & \lambda_2 \end{array}:y,z\right\} &\to \left\{ \begin{array}{c c c c} a_1 & a_3 & a_2 & \\ \kappa_1 & \kappa_2 & \lambda_1 & \lambda_2 \end{array} : y,z\frac{y-a_3}{y-a_2} \right\}, \\
r_2 : 
\left\{ \begin{array}{c c c c} a_1 & a_2 & a_3 & \\ \kappa_1 & \kappa_2 & \lambda_1 & \lambda_2 \end{array}:y,z \right\} &\to \left\{ \begin{array}{c c c c} a_1 & a_2 & a_3 & \\ \kappa_1 & \kappa_2 & \lambda_2 & \lambda_1 \end{array} :y,z\right\}.
\end{align*}
The idea is now that we obtain three translations, given by $T_{\kappa_1,\lambda_1}$, $T_{\kappa_2,\lambda_2}$ and $T_{a_1,\lambda_1}$. The remaining translations may be found via compositions of the above listed symmetries with these three translations.

We outline our first translation as 
\[
T_{\kappa_1,\lambda_1} : \left\{ \begin{array}{c c c c} a_1 & a_2 & a_3 & \\ \kappa_1 & \kappa_2 & \lambda_1 & \lambda_2 \end{array}:y,z \right\} \to \left\{ \begin{array}{c c c c} a_2 & a_1 & a_3 & \\ q\kappa_1 & \kappa_2 & q\lambda_1 & \lambda_2 \end{array}:\tilde{y},\tilde{z} \right\}. 
\]
We know that 
\[
\det \tilde{A} = q \kappa_1\kappa_2(x-a_1)(x-a_2)(x-a_3)
\]
hence, the determinant is proportional to $x$. Using \eqref{firstterm}, the behavior at $x=0$ is 
\begin{align*}
R_{\kappa_1, \lambda_1} &\sim (\tilde{C}_0 + O(x))  \begin{pmatrix} e_{q,q\lambda_1}(x) & 0 \\ 0 & e_{q,\lambda_2}(x) \end{pmatrix}  \begin{pmatrix} e_{q,\lambda_1}(x) & 0 \\ 0 & e_{q,\lambda_2}(x) \end{pmatrix}^{-1} (C_0 + O(x))^{-1}, \\
& \sim \tilde{C}_0\begin{pmatrix} x & 0 \\ 0 & 1 \end{pmatrix} C_0^{-1} + O(x),
\end{align*}
hence, the degree of the entries of $R_{\kappa_1, \lambda_1}$ in $x$ are bounded from below by zero. We then obtain a representation of $R_{\kappa_1, \lambda_1}$ by expanding $\tilde{Y}_\infty Y_\infty^{-1}$ using \eqref{firstterms} up to polynomial terms, giving
\[
R_{\kappa_1, \lambda_1}(x) = \begin{pmatrix} \displaystyle \frac{q \tilde{y}-q \tilde{z}_1+q \tilde{\gamma}-q y+q z_1-q \gamma }{q-1}+x & -q w \\
\displaystyle -\frac{q \kappa _1 \tilde{\gamma}}{\kappa _2 \tilde{w}} & 1 \end{pmatrix}.
\]
We may now use the compatibility condition,
\[
R_{\kappa_1, \lambda_1}(q x) A(x)= \tilde{A}(x)R_{\kappa_1, \lambda_1}(x),
\]
to evaluate the relationship between $y$ and $z$ and $\tilde{y}$ and $\tilde{z}$, which is given by
\begin{align*}
\tilde{y} &= \frac{\frac{a_3 y^2 \kappa _1 \left(a_1 a_2 \kappa _2-q \lambda _1\right)}{a_1 a_2 \kappa _1-z \lambda _1}-\frac{a_3 q \kappa _1
   \left(y-a_1\right) \left(y-a_2\right)}{z}+q \lambda _1 \left(a_3-y\right)}{\frac{y^2 \kappa _1 \left(a_1 a_2 \kappa _2-q \lambda _1\right)}{a_1 a_2 \kappa _1-z \lambda _1}+y \kappa _2 \left(a_3-y\right)},\\
\tilde{z} &=-\frac{q y \kappa _1 \lambda _1 \left(\tilde{y}-a_1\right) \left(\tilde{y}-a_2\right)}{\kappa _2 \left(z \lambda _1 \left(a_3-y\right)
   \tilde{y}+a_1 a_2 a_3 \kappa _1 \left(y-\tilde{y}\right)\right)}.
\end{align*}
The inverse of this map is obtained by solving for $y$ and $z$ in terms of $\tilde{y}$ and $\tilde{z}$,
\begin{align*}
y &= \frac{\frac{a_3 \kappa _2 \tilde{y}^2 \left(a_1 a_2 \kappa _2-q \lambda _1\right)}{a_3 \kappa _2 \tilde{z}+q \lambda _1}+\frac{q \lambda _1
   \left(a_1-\tilde{y}\right) \left(a_2-\tilde{y}\right)}{\tilde{z}}+a_1 a_2 \kappa _2 \left(a_3-\tilde{y}\right)}{\frac{\kappa _2 \tilde{y}^2 \left(a_3 \kappa _2 \left(a_1+a_2-\tilde{y}\right)-q \lambda _1\right)}{a_3 \kappa _2 \tilde{z}+q \lambda
   _1}+\frac{\kappa _2 \tilde{y} \left(a_1-\tilde{y}\right) \left(a_2-\tilde{y}\right)}{\tilde{z}}},\\
z\tilde{z}&=\frac{\kappa _1 \left(q y \lambda _1 \left(\tilde{y}-a_1\right) \left(\tilde{y}-a_2\right)+a_1 a_2 a_3 \kappa _2 \left(y-\tilde{y}\right)
   \tilde{z}\right)}{\kappa _2 \lambda _1 \left(y-a_3\right) \tilde{y}}.
\end{align*}

Let us identify 
\[
T_{\kappa_2,\lambda_2} : \left\{ \begin{array}{c c c c} a_1 & a_2 & a_3 & \\ \kappa_1 & \kappa_2 & \lambda_1 & \lambda_2 \end{array}:y,z\right\} \to \left\{ \begin{array}{c c c c} a_2 & a_1 & a_3 & \\ \kappa_1 & q\kappa_2 & \lambda_1 & q \lambda_2 \end{array}:\tilde{y},\tilde{z} \right\},
\]
as a transformation of the form \eqref{transformation}. Using similar logic to the case above, the determinant of $R(x)$ is $x$, and we find that the degree of the entries of $R$ in $x$ are bounded from below by zero, hence, we may obtain a representation of $R_{\kappa_2,\lambda_2}$ by using \eqref{firstterms} to obtain an expansion of $\tilde{Y}_{\infty}Y_{\infty}^{-1}$ around $x=\infty$. Up to constant terms, this gives
\[
R_{\kappa_2,\lambda_2} = \begin{pmatrix} 1 & q \tilde{w} \\
 \displaystyle \frac{\gamma  \kappa _1}{w \kappa _2} & \displaystyle x+\frac{-q y+q \tilde{y}-q \alpha +q \tilde{\alpha} }{q-1}
\end{pmatrix}.
\]
We use the compatibility condition,
\[
R_{\kappa_1, \lambda_1}(q x) A(x)= \tilde{A}(x)R_{\kappa_1, \lambda_1}(x),
\]
to show
\begin{align*}
\tilde{y}y &= \frac{-\frac{y^2 \kappa _2 \lambda _1 \left(a_3 \kappa _1+\lambda _2\right)}{\kappa _1 \left(a_3 z \kappa _2+\lambda
   _1\right)}+\frac{\lambda _1 \left(y-a_1\right) \left(y-a_2\right)}{z}+a_1 a_2 \kappa _2 \left(a_3-y\right)}{\frac{\lambda _1 \left(a_3 \kappa _1 \kappa _2 \left(y-a_1\right) \left(y-a_2\right)+\lambda _1 \left(y \kappa _1+\lambda
   _2\right)\right)}{a_3 \kappa _1 z\left(a_3 z \kappa _2+\lambda _1\right)}-\frac{\lambda _1 \left(y \kappa _1+\lambda
   _2\right)}{a_3 \kappa _1z}},\\
\tilde{z} &= -\frac{y \kappa _1 \left(\tilde{y}-a_1\right) \left(\tilde{y}-a_2\right)}{\kappa _2 \tilde{y} \left(a_3 z-y (\tilde{y}+z)+y^2\right)+\lambda_1 (\tilde{y}-y)}.
\end{align*}
Conversely, one may solve for $y$ and $z$ in terms of $\tilde{y}$ and $\tilde{z}$, which is given by
\begin{align*}
y \tilde{y} &= \frac{\frac{a_3 \tilde{y}^2 \kappa _1 \left(\lambda _1-a_1 a_2 \kappa _2\right)}{\tilde{z} \lambda _1-a_1 a_2 \kappa _1}-\frac{a_3
   \kappa _1 \left(\tilde{y}-a_1\right) \left(\tilde{y}-a_2\right)}{\tilde{z}}+\lambda _1 \left(a_3-\tilde{y}\right)}{\frac{\tilde{y} \kappa _1 \left(\lambda _1-a_1 a_2 \kappa _2\right)}{\tilde{z} \lambda _1-a_1 a_2 \kappa _1}+\kappa _2
   \left(a_3-\tilde{y}\right)},\\ 
z \tilde{z} &= \frac{y \left(\kappa _1 \left(\tilde{y}-a_1\right) \left(\tilde{y}-a_2\right)+\tilde{y} \tilde{z} \kappa _2
   (y-\tilde{y})\right)+\tilde{z} \lambda _1 (\tilde{y}-y)}{\tilde{y}  \kappa _2 \left(y-a_3\right)}.
\end{align*}

We require one more transformation, before the remaining transformations may be derived via the use of suitable symmetries of $A(x)$. The required transformation is
\[
T_{a_1,\lambda_1} : \left\{ \begin{array}{c c c c} a_1 & a_2 & a_3 & \\ \kappa_1 & \kappa_2 & \lambda_1 & \lambda_2 \end{array}:y,z\right\} \to \left\{ \begin{array}{c c c c} qa_1 & a_2 & a_3 & \\ \kappa_1 & \kappa_2 & q\lambda_1 & \lambda_2 \end{array}:\tilde{y},\tilde{z} \right\}.
\]
Note that the determinant of $R_{a_1,\lambda_1}$ is given by
\[
\det R_{a_1,\lambda_1} = \frac{x}{x-qa_1}.
\]
We also note that by similar logic to the above two cases, the behavior around $x =0$ is constant plus terms of order $x$. The behavior at $x = \infty$ is also constant plus terms of order $1/x$. Hence, $R_{a_1,\lambda_1}$ is of the form
\begin{equation}\label{Rval}
R_{a_1,\lambda_1} = \frac{R_1x+ R_0}{x-qa_1}.
\end{equation}
Expanding \eqref{Rval} of around $x=\infty$, gives to the first two leading orders, 
\[
R_{a_1,\lambda_1} = I + \frac{R_0 + q a_1}{x} + O\left( \frac{1}{x^2}\right) ,
\]
whereas, expanding $\tilde{Y}_\infty Y_{\infty}^{-1}$ to the first two leading orders from \eqref{firstterm} gives
\[
R_{a_1,\lambda_1} = \begin{pmatrix}\displaystyle \frac{-q y-q \gamma +q z_1+q \tilde{y}+q \tilde{\gamma} -q \tilde{z}_1}{(q-1) x}+1 &\displaystyle  \frac{q \tilde{w}-q w}{x} \\
 \displaystyle \frac{\gamma  \kappa _1 \tilde{w}-w \kappa _1 \tilde{\gamma} }{w x \kappa _2 \tilde{w}} &\displaystyle  \frac{q \tilde{y}+q \tilde{\alpha}-q y-q \alpha}{(q-1) x}+1 \end{pmatrix} + O\left( \frac{1}{x^2}\right).
\]
Equating these gives
\[
R_{a_1,\lambda_1} = \frac{1}{x-qa_1}\begin{pmatrix}
\displaystyle x+\frac{q (y+\alpha-\tilde{y}-\tilde{\alpha})}{q-1} & q (\tilde{w}-w) \\
\displaystyle \frac{\kappa _1 (\gamma  \tilde{w}-w \tilde{\gamma} )}{w \kappa _2 \tilde{w}} & \displaystyle x+\frac{q \left(\tilde{y}+\tilde{\alpha}-y-\alpha \right)}{q-1} - qa_1
   \end{pmatrix}.
\]
Lastly, we use the compatibility,
\[
R_{\kappa_1, \lambda_1}(q x) A(x)= \tilde{A}(x)R_{\kappa_1, \lambda_1}(x),
\]
to show
\begin{align*}
\tilde{y} &=\frac{a_2 y \kappa _1 \left(y-a_1\right) \left(a_3 \kappa _2 \tilde{z}+q \lambda _1\right)}{\lambda _1 \left(q y \kappa _1
   \left(y-a_1\right)+z \kappa _2 \left(a_3-y\right) \tilde{z}\right)+a_2 a_3 \kappa _1 \kappa _2 \left(y-a_1\right) \tilde{z}},\\
\tilde{z} &= y \frac{\frac{\kappa _1 \left(a_1 q \lambda _1+a_2 a_3 z \kappa _2\right)}{a_1 y}+\frac{a_2 \kappa _1 \lambda _1 \left(z \kappa _2-q
   \kappa _1\right)}{a_1 a_2 \kappa _1-z \lambda _1}+\frac{a_2 \left(a_3-a_1\right) z^2 \kappa _1 \kappa _2 \lambda _1}{a_1
   \left(y-a_1\right) \left(a_1 a_2 \kappa _1-z \lambda _1\right)}}{\frac{\left(a_3-a_1\right) z^2 \kappa _2 \lambda _1^2}{a_1 \left(y-a_1\right) \left(a_1 a_2 \kappa _1-z \lambda
   _1\right)}+\frac{a_3 \kappa _2 \left(z \lambda _1-a_1 a_2 \kappa _1\right)}{a_1 y}+\frac{a_2 \kappa _1 \kappa _2 \left(a_2 a_3
   \kappa _1-z \lambda _1\right)}{a_1 a_2 \kappa _1-z \lambda _1}},
\end{align*}
and the inverse
\begin{align*}
y &= \tilde{z} \frac{\frac{a_2 a_3 \kappa _2^2 \tilde{y}^2 \left(a_3 \kappa _1+\lambda _1\right)}{a_3 \kappa _2 \tilde{z}+q \lambda _1}+\frac{\left(a_2-\tilde{y}\right)
   \left(a_2 a_3 \kappa _1 \kappa _2 \tilde{y}+q \lambda _1^2\right)}{\tilde{z}}+a_2 \kappa _2 \lambda _1 \left(a_3-\tilde{y}\right)}{\frac{q \kappa _1 \lambda _1 \left(a_2-\tilde{y}\right){}^2}{\tilde{z}}-\frac{q \kappa _2 \lambda _1 \tilde{y}^2 \left(a_3 \kappa _1+\lambda
   _1\right)}{a_3 \kappa _2 \tilde{z}+q \lambda _1}+a_2 \kappa _2 \left(a_2 a_3 \kappa _1+\lambda _1 \tilde{y}\right)},\\ 
z &= \frac{\kappa _1 \left(y-a_1\right) \left(q y \lambda _1 \left(\tilde{y}-a_2\right)+a_2 a_3 \kappa _2 \left(\tilde{y}-y\right)
   \tilde{z}\right)}{\kappa _2 \lambda _1 \left(y-a_3\right) \tilde{y} \tilde{z}}.
\end{align*}

The remaining generators for the translations of the connection data may be obtained by suitable conjugation of the symmetries, $r_0$, $r_1$ and $r_2$. Hence, we list the remaining generators as 
\begin{align*}
T_{a_2,\lambda_1} &= r_0 \circ T_{a_1,\lambda_1} \circ r_0,  \\  
T_{a_3,\lambda_1} &= r_1 \circ r_0 \circ T_{a_1,\lambda_1} \circ r_0 \circ r_1, \\
T_{a_2,\lambda_2} &= r_2 \circ r_0 \circ T_{a_1,\lambda_1} \circ r_0 \circ r_2,  \\ 
T_{a_3,\lambda_1} &= r_2 \circ r_1 \circ r_0 \circ T_{a_1,\lambda_1} \circ r_0 \circ r_1 \circ r_2, \\
T_{\kappa_1,\lambda_2} &= r_2 \circ T_{\kappa_1,\lambda_1} \circ r_2, \\ 
T_{\kappa_2,\lambda_1} &= r_2 \circ T_{\kappa_2,\lambda_2} \circ r_2, 
\end{align*}
which generates the full set of translational components of the connection data. The symmetries also give us a matrix representation of the for \eqref{transformation} for each translation. By \eqref{multrule}, we are able to express any connection preserving deformation as a product of $R$ matrices.

In this way, we may think of the evolution of $q$-$\mathrm{P}_{\mathrm{V}}$ as being given by 
\begin{align*}
q\textrm{-}\mathrm{P}_{\mathrm{V}} &= T_{a_1,\lambda_1} \circ T_{a_2,\lambda_2} \circ T_{\kappa_1,\lambda_1}^{-1} \circ T_{\kappa_2,\lambda_2}^{-1}.
\end{align*}
This theory also strongly suggests a factorization of the matrix that defines the evolution of $q$-$\mathrm{P}_{\mathrm{V}}$. 
It is also possible, via the transformations above, to express all the $R$ matrices in terms of untilded variables, hence, factorize any Schlesinger transformation into matrices specified above. 

The above information is sufficient in allowing us to form the lattice of translational B\"acklund transformations for \eqref{qPV:geo}. We note that the lattice, $A_4^{(1)}$, is spanned by basis elements with the following effect on the $b_i$:
\begin{align*} 
T_0(b_0) = qb_0, 					&\hspace{1cm}  T_1(b_0) = b_0 , 					&\hspace{1cm} T_2(b_0) = b_0, & \hspace{1cm} 
T_3(b_0) = b_0,  &\hspace{1cm} T_4(b_0) = \frac{b_0}{q}, \\
T_0(b_1) = \frac{b_1}{q},  &\hspace{1cm} T_1(b_1) = qb_1 , 					&\hspace{1cm} T_2(b_1) = b_1,  &\hspace{1cm}
T_3(b_1) =b_1,  &\hspace{1cm} T_4(b_1) = b_1, \\
T_0(b_2) = b_2,  					&\hspace{1cm} T_1(b_2) = \frac{b_2}{q},  &\hspace{1cm} T_2(b_2) = q b_2,  &\hspace{1cm}
T_3(b_2) =b_2,  &\hspace{1cm} T_4(b_2) = b_2, \\
T_0(b_3) = b_3,  					&\hspace{1cm} T_1(b_3) = b_3,  					&\hspace{1cm} T_2(b_3) = \frac{b_3}{q},  &\hspace{1cm}
T_3(b_3) =q b_4,  &\hspace{1cm} T_4(b_3) = b_3, \\
T_0(b_4) = b_4,  					&\hspace{1cm} T_1(b_4) = b_4,  					&\hspace{1cm} T_2(b_4) = b_4,  &\hspace{1cm}
T_3(b_4) = \frac{b_4}{q},  &\hspace{1cm} T_4(b_4) = qb_4. 
\end{align*} 
We identify the five elements of the basis of translational components of the B\"acklund transformation as being equivalent to the following connection preserving deformations:
\begin{align*}
T_0 : \left\{ \begin{array}{c c c c} a_1 & a_2 & a_3 & \\ \kappa_1 & \kappa_2 & \lambda_1 & \lambda_2 \end{array}:y,z\right\} &\to 
      \left\{ \begin{array}{c c c c} a_1 &qa_2 &qa_3 & \\ \kappa_1 & \kappa_2 &q\lambda_1 &q\lambda_2 \end{array}:\tilde{y},\tilde{z}\right\}, \\
T_1 : \left\{ \begin{array}{c c c c} a_1 & a_2 & a_3 & \\ \kappa_1 & \kappa_2 & \lambda_1 & \lambda_2 \end{array}:y,z\right\} &\to 
      \left\{ \begin{array}{c c c c}a_1/q&a_2/q^2&a_3/q& \\ \kappa_1 & q^2\kappa_2 & \lambda_1/q & \lambda_2/q \end{array}:\tilde{y},\tilde{z}\right\}, \\
T_2 : \left\{ \begin{array}{c c c c} a_1 & a_2 & a_3 & \\ \kappa_1 & \kappa_2 & \lambda_1 & \lambda_2 \end{array}:y,z\right\} &\to 
      \left\{ \begin{array}{c c c c} a_1 & a_2 & a_3 & \\ \kappa_1 & \kappa_2/q & \lambda_1 & \lambda_2/q \end{array}:\tilde{y},\tilde{z}\right\}, \\
T_3 : \left\{ \begin{array}{c c c c} a_1 & a_2 & a_3 & \\ \kappa_1 & \kappa_2 & \lambda_1 & \lambda_2 \end{array}:y,z\right\} &\to 
      \left\{ \begin{array}{c c c c} a_1 & a_2 & a_3 & \\ q\kappa_1 & \kappa_2 & \lambda_1 & q\lambda_2 \end{array}:\tilde{y},\tilde{z}\right\}, \\
T_4 : \left\{ \begin{array}{c c c c} a_1 & a_2 & a_3 & \\ \kappa_1 & \kappa_2 & \lambda_1 & \lambda_2 \end{array}:y,z\right\} &\to 
      \left\{ \begin{array}{c c c c} qa_1 & qa_2 & a_3 & \\ \kappa_1 & \kappa_2 & q\lambda_1 & q\lambda_2 \end{array}:\tilde{y},\tilde{z}\right\}, \\
T_0T_1T_2T_3T_4 : \left\{ \begin{array}{c c c c} a_1 & a_2 & a_3 & \\ \kappa_1 & \kappa_2 & \lambda_1 & \lambda_2 \end{array}:y,z\right\} &\to 
      \left\{ \begin{array}{c c c c} a_1 & a_2 & a_3 & \\ q\kappa_1 & q\kappa_2 & q\lambda_1 & q\lambda_2 \end{array}:y,z\right\}.
\end{align*}
The last translation is an identity on the space of parameters and of the Painlev\'e equation itself as discussed previously. Using \eqref{multrule} and the $R$ matrices that specify each connection preserving deformation, we may obtain a Lax pair, \eqref{qdiff} and \eqref{transformation}, for each translational B\"acklund transformation.

We note that $r_0$, $r_1$ and $r_2$ are Schlesinger manifestations of symmetries of the Painlev\'e equation, however, there are two symmetries that we have been unable incorporate into this theory. One symmetry seems to correspond to a switch of the asymptotic behaviors of single columns between solutions at $x=0$ and $x= \infty$, which we cannot, at present, see how this may be induced by the left multiplication of a rational matrix. The other is a Dynkin diagram automorphism. The full presentation of B\"acklund transformations may be derived from Sakai's' work \cite{Sakai:Rational}. Further investigation is warranted into how these other symmetries of the Painlev\'e equation may manifest themselves as symmetries, or perhaps more generally, as connection preserving deformations of the associated linear problem.

\section{$q$-differential equations for orthogonal polynomials}

We now move towards the second aim of this paper. This being a special orthogonal polynomial solution to the associated linear system above. We shall give a brief account of the theory of $q$-differential equations satisfied by $q$-orthogonal polynomials. We note that our approach, which is based on an extension of an approach of Laguerre \cite{OrmerodForresterWitte, Laguerre, Magnus1995, Shohat}, is one of many approaches found in the literature \cite{Biane, qladder}. Some have been known to produce discrete Painlev\'e equations \cite{Shohat}.

Given a sequence of moments, $\{\mu_k\}_{k=0}^{\infty}$, one may define the linear form on the space of polynomials so that
\begin{equation}\label{ortho:linearform}
L(x^k) = \mu_k.
\end{equation}
If the determinant,
\begin{equation}\label{ortho:delta}
\Delta_n = \det \begin{pmatrix} 
\mu_0 & \mu_1 & \ldots & \mu_{n-1} \\ 
\mu_1 & \mu_2 & \ldots & \mu_n\\
\vdots &   & \ddots & \vdots \\
\mu_{n-1} & \mu_n & \ldots & \mu_{2n-1}
\end{pmatrix},
\end{equation}
where $\Delta_0 = 1$, does not vanish for all $n \in \mathbb{N}$, then the polynomial sequence, $(p_n)_{n\in \mathbb{N}}$, where $p_n$ is of degree $n$, is uniquely defined by the condition
\[
L(p_i(x)p_j(x)) = \delta_{ij}.
\]
We also define
\begin{equation}\label{ortho:sigma}
\Sigma_n = \det \begin{pmatrix} 
\mu_0 & \mu_1 & \ldots & \mu_{n-2} & \mu_n \\ 
\mu_1 & \mu_2 & \ldots & \mu_{n-1}& \mu_{n+1} \\
\vdots &   & \ddots &  & \vdots &  \\
\mu_{n-1} & \mu_n & \ldots & \mu_{2n-2}& \mu_{2n}
\end{pmatrix},
\end{equation}
with initial values, $\Sigma_0 = 0$ and $\Sigma_1 = \mu_1$. In letting  $\{a_n\}_{n=1}^{\infty}$ and $\{b_n\}_{n=0}^{\infty}$ be 
\begin{subequations}
\begin{align}
\label{ortho:an} a_n^2 =& \frac{\Delta_{n-1}\Delta_{n+1}}{\Delta_n^2},\\
\label{ortho:bn} b_n =& \frac{\Sigma_{n+1}}{\Delta_{n+1}} - \frac{\Sigma_{n}}{\Delta_{n}},
\end{align}
\end{subequations}
then
\begin{equation}\label{ortho:3termrecurrence}
a_{n+1}p_{n+1}(x) = (x-b_n)p_n(x) - a_np_{n-1}(x).
\end{equation}
This is known as the three term recursion relation \cite{Szego}. 

The moments specify a Stieltjes function, or moment generating function, defined by
\begin{equation}\label{ortho:Stieltjes}
f(x) = \sum_{k=0}^{\infty} \frac{\mu_k}{x^{k+1}}.
\end{equation}
From this, we define the associated polynomials and associated functions via 
\begin{equation}\label{epsdef}
f(x)p_n(x) = \phi_{n-1}(x) + \epsilon_n(x),
\end{equation}
whereby multiplying \eqref{ortho:3termrecurrence} by $f$ reveals that $\{\epsilon_n\}_{n=0}^{\infty}$ and $\{\phi_{n-1}\}_{n=1}^{\infty}$ also satisfy \eqref{ortho:3termrecurrence}. We parameterize the orthogonal polynomials as
\begin{equation}\label{ortho:poly}
p_n(x) = \rho_n x^n + \rho_{1,n}x^{n-1} + \rho_{2,n}x^{n-2}+ \ldots,
\end{equation}
where
\begin{equation}
\label{ortho:rho} \rho_n^2 = \frac{\Delta_n}{\Delta_{n+1}}.
\end{equation}
Using \eqref{ortho:3termrecurrence}, we are able to provide a parameterization of the coefficients of $p_n$ and $\epsilon_n$ in terms of the $a_i$ and the $b_i$. The first few leading terms in the expansion of $p_n$ and $\epsilon_n$ around $x=\infty$ are
\begin{subequations}\label{ortho:largexexpansion}
\begin{align}
p_n =& \rho_n\left(x^n - x^{n-1}\sum_{i =0}^{n-1} b_i \right. \\ & \left. + x^{n-2}\left(\sum_{i =0}^{n-2} \sum_{j=i+1}^{n-1} b_i b_j - \sum_{i=1}^{n-1}a_i^2\right) + O(x^{n-3}) \right)\nonumber ,\\  
\epsilon_n =& \rho_n^{-1}\left(x^{-n-1} + x^{-n-2}\sum_{i =0}^n b_i \right. \\ & \left. +  x^{-n-3}\left(\sum_{i =0}^{n} \sum_{j=0}^{i} b_i
b_j + \sum_{i=1}^{n+1}a_i^2\right) +  O(x^{-n-4}) \right)\nonumber.
\end{align}
\end{subequations}
Identifying \eqref{ortho:largexexpansion} with \eqref{ortho:poly} reveals
\begin{align*}
a_n &= \frac{\rho_{n-1}}{\rho_n},\\
b_n &= \frac{\rho_{n,1}}{\rho_n} -  \frac{\rho_{n+1,1}}{\rho_{n+1}}.
\end{align*}
Another useful formula is obtained by equating $(fp_n)p_{n-1}$ with $(fp_{n-1})p_n$, giving
\begin{align}\label{pnident}
\phi_{n-1}p_{n-1} - \phi_{n-2}p_n = \epsilon_{n-1}p_n - \epsilon_np_{n-1} = \frac{1}{a_n}.
\end{align}

We now impose a structure that may be associated with $q$-orthogonal polynomial systems. We assume that there exists a recurrence relation for the moments, expressed in terms of the moment generating function as
\begin{equation}\label{ortho:recurrencemoments}
W(x) D_{q,x} f(x) = 2V(x)f(x) + U(x)
\end{equation}
where $W(x)$, $V(x)$ and $U(x)$ are polynomials. Assuming that the linear form may be expressed in terms of the Jackson $q$-integral
\begin{equation}\label{ortho:linearformweight}
L(f(x)) = \int_{a}^{b}w(x) f(x)d_{q}x
\end{equation}
for some weight function, $w(x)$, one may prove
\[
W(x)D_{q,x}w = 2V(x)w(x).
\]
We call $W(x)$ and $V(x)$ the spectral data polynomials \cite{OrmerodForresterWitte, Magnus1995}. We define
\[
\Psi_n(x) = \begin{pmatrix} p_n & \frac{\epsilon_n}{w} \\ p_{n-1} & \frac{\epsilon_{n-1}}{w} \end{pmatrix}.
\]
The following theorem may be seen as a consequence of the work of Magnus \cite{OrmerodForresterWitte, Magnus1995}.  

\begin{thm}\label{thm:Dqxpn}
The matrix, $\Psi_n$, is the solution to
\begin{equation}\label{Linear:thm}
D_{q,x}\Psi_n = \mathscr{L}_n \Psi_n,
\end{equation}
where
\begin{align}\label{Linear:Aform}
\mathscr{L}_n &= \frac{1}{(W-2x(1-q)V)}\begin{pmatrix}
\Omega_n - V & -a_n \Theta_n \\
a_n \Theta_{n-1} & \Omega_{n-1} - V - (x-b_{n-1})\Theta_{n-1}
\end{pmatrix},
\end{align}
and $\Theta_n$ and $\Omega_n$ are polynomials of bounded degree, specified by
\begin{subequations}\label{thm:defThetaOmega}
\begin{eqnarray}
\label{thm:defTheta}\Theta_n &=& W (\epsilon_n D_{q,x} p_n - p_n D_{q,x} \epsilon_n) + 2V\epsilon_n(p_n - x(1-q)D_{q,x}p_n),\\
\label{thm:defOmega}\Omega_n &=& a_n W (\epsilon_{n-1} D_{q,x} p_n - p_{n-1} D_{q,x} \epsilon_n) \\
&& \hspace{.5cm} + a_nV (p_n\epsilon_{n-1} + p_{n-1}\epsilon_n) - 2V x(1-q) a_n \epsilon_{n-1} D_{q,x} p_n. \nonumber
\end{eqnarray}
\end{subequations}
\end{thm}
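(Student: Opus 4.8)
The plan is to exploit the invertibility of $\Psi_n$. Since the Casoratian-type identity \eqref{pnident} gives $\epsilon_{n-1}p_n - \epsilon_n p_{n-1} = 1/a_n$, we have $\det \Psi_n = (a_n w)^{-1} \neq 0$, so $\Psi_n$ is invertible and we may simply \emph{define} $\mathscr{L}_n := (D_{q,x}\Psi_n)\Psi_n^{-1}$; the system \eqref{Linear:thm} then holds by construction. The entire content of the theorem is therefore to compute the entries of this $\mathscr{L}_n$ explicitly, to match them with \eqref{Linear:Aform}, and to establish that the quantities $\Theta_n$ and $\Omega_n$ defined by \eqref{thm:defThetaOmega} really are polynomials of degree bounded independently of $n$.

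First I would isolate the common denominator. From the weight equation $W D_{q,x} w = 2V w$ (the consequence of \eqref{ortho:recurrencemoments} and \eqref{ortho:linearformweight} quoted in the text) together with the definition \eqref{q:diff}, one reads off $w(qx)/w(x) = (W - 2x(1-q)V)/W$. Feeding this, along with $D_{q,x}w = 2Vw/W$, into the $q$-quotient rule \eqref{qquotrule} yields
\[
D_{q,x}\!\left(\frac{\epsilon_n}{w}\right) = \frac{W D_{q,x}\epsilon_n - 2V\epsilon_n}{w\,(W - 2x(1-q)V)},
\]
which is where the factor $W - 2x(1-q)V$ in \eqref{Linear:Aform} originates.

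Next I would carry out the matrix product $\mathscr{L}_n = (D_{q,x}\Psi_n)\Psi_n^{-1}$ using $\Psi_n^{-1} = a_n\begin{pmatrix} \epsilon_{n-1} & -\epsilon_n \\ -wp_{n-1} & wp_n \end{pmatrix}$. Substituting the expression above for $D_{q,x}(\epsilon_n/w)$ and recording the simplification $p_n - x(1-q)D_{q,x}p_n = p_n(qx)$, the top row collapses at once: the $(1,2)$ entry becomes $-a_n\Theta_n/(W-2x(1-q)V)$ with $\Theta_n$ exactly \eqref{thm:defTheta}, while the $(1,1)$ entry reproduces $(\Omega_n - V)/(W-2x(1-q)V)$ once one invokes \eqref{pnident} to convert $a_n V(\epsilon_n p_{n-1} - p_n \epsilon_{n-1})$ into $-V$. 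The bottom row is the same computation shifted to index $n-1$, except that it must be re-expressed in the basis $\{p_n,p_{n-1}\}$ (respectively $\{\epsilon_n,\epsilon_{n-1}\}$); eliminating $p_{n-2}$ and $\epsilon_{n-2}$ through the three-term recurrence \eqref{ortho:3termrecurrence} is precisely what produces the extra $-(x-b_{n-1})\Theta_{n-1}$ contribution in the $(2,2)$ entry.

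The main obstacle is the final claim, that $\Theta_n$ and $\Omega_n$ are polynomials of bounded degree, and here I would argue in two stages. For polynomiality, substitute the defining relation \eqref{epsdef}, $\epsilon_n = f p_n - \phi_{n-1}$, into \eqref{thm:defTheta}; applying the product rule \eqref{qmultrule} the terms proportional to $f p_n D_{q,x}p_n$ cancel, and the surviving $f$-dependent contribution is exactly $-p_n p_n(qx)\,(W D_{q,x}f - 2Vf)$, which the moment equation \eqref{ortho:recurrencemoments} replaces by $-U\,p_n p_n(qx)$. This leaves
\[
\Theta_n = W\bigl(p_n D_{q,x}\phi_{n-1} - \phi_{n-1} D_{q,x}p_n\bigr) - 2V\phi_{n-1} p_n(qx) - U\, p_n p_n(qx),
\]
a manifest polynomial, and an identical manipulation handles $\Omega_n$. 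For the degree bound I would instead return to the original $\epsilon_n$-representation and use the large-$x$ expansions \eqref{ortho:largexexpansion}: since $\epsilon_n = O(x^{-n-1})$ while $p_n = O(x^{n})$ and $D_{q,x}x^m = \tfrac{1-q^m}{1-q}x^{m-1}$, each $q$-Casoratian $\epsilon_n D_{q,x}p_n - p_n D_{q,x}\epsilon_n$ is $O(x^{-2})$ and $\epsilon_n p_n(qx) = O(x^{-1})$, so multiplication by the fixed-degree spectral data polynomials $W$ and $V$ keeps $\deg \Theta_n$ (and likewise $\deg\Omega_n$) bounded by a constant depending only on $\deg W$ and $\deg V$, not on $n$. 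Reconciling the polynomial $\phi$-representation with this $\epsilon$-controlled decay is the crux of the argument.
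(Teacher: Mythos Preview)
Your argument is correct and complete. The paper itself does not supply a proof of this theorem; it simply states ``For a detailed proof, we refer to \cite{OrmerodForresterWitte}'' and then records the degree bounds \eqref{eq3:degthetaomega} with the remark that substituting the expansions \eqref{ortho:largexexpansion} into \eqref{thm:defThetaOmega} evaluates all coefficients. Your proposal is therefore not a different route so much as a reconstruction of the omitted proof, and it matches what the paper sketches: the degree bound via the large-$x$ behaviour of $p_n$ and $\epsilon_n$ is exactly the mechanism the paper invokes in the sentence following the theorem, and your polynomiality argument (trading $\epsilon_n$ for $f p_n - \phi_{n-1}$ and invoking $W D_{q,x} f - 2Vf = U$ to eliminate $f$) is the standard Laguerre--Magnus manoeuvre that \cite{OrmerodForresterWitte} and \cite{Magnus1995} use.
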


For a detailed proof, we refer to \cite{OrmerodForresterWitte}. Using \eqref{ortho:largexexpansion}, the degree of $\Omega_n$ and $\Phi_n$ are bounded by
\begin{subequations}\label{eq3:degthetaomega}
\begin{align}
\label{eq3:degtheta}\deg_x{\Theta_n} &\leq \max(\deg_x W-1, \deg_x V -2 ,0),\\
\label{eq3:degomega}\deg_x{\Omega_n} &\leq \max(\deg_x W, \deg_x V -1 ,0).
\end{align}
\end{subequations}
Furthermore, by substituting the known expansion of $\epsilon_n$ and $p_n$, given by \eqref{ortho:largexexpansion}, into \eqref{thm:defTheta} and \eqref{thm:defOmega}, we may evaluate all terms of $\Omega_n$ and $\Theta_n$ in terms of the $a_i$'s, $b_i$'s and $\rho_i$'s. 

Using theorem \ref{thm:Dqxpn}, we define the matrix
\begin{equation}\label{ortho:Aform}
L_n = I - x(1-q)\mathscr{L}_n,
\end{equation} 
so that we express the $q$-differential equation as a system of linear $q$-difference equations of the familiar (see \cite{Adams, qBirkhoff, Carmichael, Sakai:qP6, Murata2009, Sakai:Garnier, Sauloy, vanderPut}) form 
\begin{equation}\label{ortho:linearx}
\Psi_n(qx) = L_n(x)\Psi_n(x),
\end{equation}
where $L_n(x)$ is rational, and given by
\begin{equation}\label{Lnform}
L_n = \begin{pmatrix} {\displaystyle \frac{(q-1) x \left(\Omega _n+V\right)+W}{W-2x (1-q)V}} &{\displaystyle -\frac{(q-1) x a_n \Theta _n}{W-2x (1-q)V}} \\
{\displaystyle \frac{(q-1) x a_n \Theta _{n-1}}{W-2x (1-q)V} } & {\displaystyle \frac{(q-1) x \left(\Theta _{n-1} (b_{n-1}-x)+\Omega _{n-1}+V\right)+W}{W-2x (1-q)V}}
\end{pmatrix}.
\end{equation}
We notice that by \eqref{pnident}, the determinant of $\Psi_n$ is
\begin{equation}\label{DetPsi}
\det \Psi_n = \frac{p_n\epsilon_{n-1} - p_{n-1}\epsilon_n}{w} = \frac{1}{a_nw}.
\end{equation}
This useful identity gives us that
\begin{equation}\label{ortho:con}
\det L_n = \frac{w(x)}{w(qx)}= \frac{W}{W-2x(1-q)V}.
\end{equation}

In addition to the $q$-differential equation in $x$, \eqref{ortho:3termrecurrence} is equivalent to 
\begin{equation}\label{ortho:linearn}
\Psi_{n+1}(x) = M_n(x)\Psi_n(x),
\end{equation}
where 
\[
M_n(x) = \begin{pmatrix}
\frac{x-b_n}{a_{n+1}} & -\frac{a_n}{a_{n+1}} \\ 1 & 0 \end{pmatrix}.
\]
The compatibility condition between \eqref{ortho:linearx} and \eqref{ortho:linearn} is
\[
L_{n+1}(x) M_n(x) = M_n(qx)L_n(x),
\]
which implies
\begin{subequations}
\begin{align}
\label{Frued1}\Omega _{n+1} \left(x-b_n\right)+ \Omega _n \left(q x-b_n\right)-x(1-q) V &= a_{n+1}^2 \Theta _{n+1}- a_n^2 \Theta _{n-1} \\
\label{Frued2}\Theta _n \left(q x-b_n\right)+\Theta _{n-1} \left(b_{n-1}-x\right)&=\Omega_{n+1} - \Omega_{n-1}.
\end{align}
\end{subequations}
These equations are $q$-differential analogues of the Freud-Laguerre equations \cite{Magnus1995}. We should also mention that there is a ladder operator \cite{Ladder} approach to finding the $q$-difference equation satisfied by the orthogonal polynomial system \cite{qladder}. 

In addition to the differential system in $x$, we note that there is additional structure to be obtained from the methods above. We shall explore the relation between a system of polynomials polynomials, $\{p_n\}$, associated with a weight function, $w$, to a system of polynomials, $\{\tilde{p}_n\}$, with associated weight function, $\tilde{w}$, which is, a priori, a rational multiple of $w$. In a previous study \cite{OrmerodForresterWitte}, this was considered a $t$ shift, however, we find that the concept, although very fruitful in continuous systems \cite{Magnus1995}, is not congruent with current trends in discrete Painlev\'e equations \cite{Noumi, Sakai:Rational}. We wish to take a slightly different approach to the material presented in  \cite{OrmerodForresterWitte}, and to caste the deformation theory in a similar manner to \S 2. 

The first deviation from previous material lies in an equivalent regularity condition, which we take to be
\[
Rw = S\tilde{w},
\]
where $R$ and $S$ are polynomials in $x$. This implies a recurrence in for the moments, governed by 
\begin{equation}\label{ftilde}
R\tilde{f} = Sf+T,
\end{equation}
where $T$ is a polynomial in $x$. These types of conditions, namely \eqref{ftilde}, are specify the relation between the sequence $\{\mu_k\}_{k=0}^\infty$ and the sequence $\{\tilde{\mu}_k\}_{k=0}^{\infty}$ \cite{Bonan1, Bonan2, OrmerodForresterWitte, Magnus1995}. We find that in order for this system to be consistent, the two was of calculating $D_{q,x}\tilde{w}$ and $D_{q,x}\tilde{f}$ must agree, imposing the constraints
\begin{align*}
\frac{2\tilde{V}(x)}{\tilde{W}(x)} \frac{S(x)}{R(x)} &= \frac{S(qx)}{R(qx)}\frac{2V(x)}{W(x)} + D_{q,x} \frac{S(x)}{R(x)},\\
\frac{2\tilde{V}(x)}{\tilde{W}(x)}\frac{T(x)}{R(x)} &=  \frac{S(qx)}{R(qx)}\frac{U(x)}{W(x)} + D_{q,x} \frac{T(x)}{R(x)}.
\end{align*}
The following theorem specifies the relation between $\Psi_n$ and $\tilde{\Psi}_n$.

\begin{thm}
The matrix, $\Psi_n$, is related to $\tilde{\Psi}_n$ via
\begin{align}
\tilde{\Psi}_n = \mathscr{R}_n \Psi_n,
\end{align}
where
\begin{align}\label{Rdef}
\mathscr{R}_n = \frac{1}{S}\begin{pmatrix}\displaystyle \Xi_n & - a_n \Phi_n \\
\displaystyle a_n\Phi_{n-1} & \Xi_{n-1}- (x-b_{n-1})\Phi_{n-1} 
\end{pmatrix},
\end{align}
and $\Xi_n$ and $\Phi_n$ are polynomials of bounded degree, specified by
\begin{subequations}
\begin{align}
\label{defPhi}\Phi_n &=  S\epsilon_n\tilde{p}_n - R\tilde{\epsilon}_n p_n,\\
\label{defXi}\Xi_n &= a_n S \epsilon_{n-1}\tilde{p}_n - a_nR\tilde{\epsilon}_np_{n-1}.
\end{align}
\end{subequations}
\end{thm}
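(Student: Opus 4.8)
The plan is to mirror the structure of Theorem \ref{thm:Dqxpn} exactly, since the statement to be proved is the ``connection preserving deformation'' analogue of that differential system: instead of applying $D_{q,x}$ to $\Psi_n$, we are comparing the polynomial system $\{p_n,\epsilon_n\}$ against the system $\{\tilde p_n,\tilde\epsilon_n\}$ attached to the rationally related weight $\tilde w$. The key observation is that $\mathscr{R}_n$ is defined so that its action simply re-expresses $\tilde p_n$ and $\tilde\epsilon_n/\tilde w$ as $S^{-1}$ times combinations of $p_n,p_{n-1},\epsilon_n,\epsilon_{n-1}$. So the proof is essentially a verification: substitute the definitions \eqref{defPhi} and \eqref{defXi} of $\Phi_n$ and $\Xi_n$ into the matrix product $\mathscr{R}_n\Psi_n$ and check that the four entries reproduce $\tilde p_n$, $\tilde\epsilon_n/\tilde w$, $\tilde p_{n-1}$, $\tilde\epsilon_{n-1}/\tilde w$.

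First I would record the two crucial algebraic inputs. The first is the Wronskian-type identity \eqref{pnident}, namely $\epsilon_{n-1}p_n - \epsilon_n p_{n-1} = 1/a_n$, together with its companion $\phi_{n-1}p_n-\phi_{n-2}p_{n+1}=\ldots$; these let one invert the pair $(p_n,p_{n-1})$ explicitly, since the matrix $\bigl(\begin{smallmatrix} p_n & \epsilon_n/w \\ p_{n-1} & \epsilon_{n-1}/w\end{smallmatrix}\bigr)$ has determinant $1/(a_n w)$ by \eqref{DetPsi}. The second input is the defining relation \eqref{ftilde}, $R\tilde f = Sf + T$, combined with the decomposition \eqref{epsdef}, $fp_n = \phi_{n-1}+\epsilon_n$ and its tilde analogue. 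Concretely, I would compute the top-left entry of $\mathscr{R}_n\Psi_n$, which is $S^{-1}\bigl(\Xi_n p_n - a_n\Phi_n p_{n-1}\bigr)$, substitute \eqref{defPhi}--\eqref{defXi}, and watch the terms organize themselves: the $\tilde p_n$ factors should pull out and the residual combination of $p$'s and $\epsilon$'s should collapse via \eqref{pnident} to leave exactly $S\tilde p_n$, so that the entry equals $\tilde p_n$ as required.

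The genuinely delicate step is the off-diagonal / second-column entries, where one must show $S^{-1}\bigl(\Xi_n\,\tfrac{\epsilon_n}{w} - a_n\Phi_n\,\tfrac{\epsilon_{n-1}}{w}\bigr) = \tilde\epsilon_n/\tilde w$. This is where the weight relation $Rw=S\tilde w$ does the real work: it converts the $1/w$ weighting into $R/(S\tilde w)$, and one must check that the $R$ and $S$ prefactors sitting inside $\Xi_n$ and $\Phi_n$ recombine correctly against \eqref{ftilde} to produce a single clean $\tilde\epsilon_n$. I expect the main obstacle to be keeping the bookkeeping straight between the ``tilded'' objects evaluated against $\tilde w$ versus the untilded objects against $w$, and in particular verifying that $\Phi_n$ and $\Xi_n$ as defined are genuinely polynomials (the claimed bounded degree), which requires that all the apparent poles from $f$ and $\tilde f$ cancel — again a consequence of \eqref{epsdef} and \eqref{ftilde}. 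Since the statement asserts polynomiality of $\Phi_n,\Xi_n$, I would close by arguing, exactly as in the degree bounds \eqref{eq3:degthetaomega} for Theorem \ref{thm:Dqxpn}, that substituting the large-$x$ expansions \eqref{ortho:largexexpansion} of $p_n,\epsilon_n$ into \eqref{defPhi}--\eqref{defXi} truncates the Laurent tails, leaving polynomials whose degrees are controlled by $\deg R$ and $\deg S$.
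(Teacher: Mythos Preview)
Your verification approach is correct and is actually more direct than the paper's. You plan to substitute \eqref{defPhi}--\eqref{defXi} into $\mathscr{R}_n\Psi_n$ and collapse using \eqref{pnident} and $Rw=S\tilde w$; indeed the $(1,1)$ entry becomes $S^{-1}\bigl(a_nS\tilde p_n(\epsilon_{n-1}p_n-\epsilon_np_{n-1})\bigr)=\tilde p_n$, the $(1,2)$ entry becomes $S^{-1}w^{-1}\bigl(a_nR\tilde\epsilon_n(p_n\epsilon_{n-1}-p_{n-1}\epsilon_n)\bigr)=R\tilde\epsilon_n/(Sw)=\tilde\epsilon_n/\tilde w$, and the second row follows by the three-term recurrence as you anticipate. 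The paper instead takes a constructive route: it first \emph{derives} the polynomial representation $\Phi_n=Rp_n\tilde\phi_{n-1}-S\phi_{n-1}\tilde p_n-Tp_n\tilde p_n$ from \eqref{ftilde}, then builds $\Xi_n$ by a division argument (writing $\phi_{n-1}p_n\Xi_n$ two ways and invoking \eqref{pnident}), and only afterward checks the second column. Your approach is cleaner as a proof of the theorem as stated; the paper's buys an explanation of where $\Xi_n,\Phi_n$ come from.

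One point to tighten: your final paragraph conflates two separate issues. The large-$x$ expansions \eqref{ortho:largexexpansion} give degree \emph{bounds} once polynomiality is known, but they do not by themselves ``truncate the Laurent tails''. Polynomiality of $\Phi_n$ genuinely requires the substitution $\epsilon_n=fp_n-\phi_{n-1}$ and $\tilde\epsilon_n=\tilde f\tilde p_n-\tilde\phi_{n-1}$ followed by $Sf-R\tilde f=-T$, which yields the manifestly polynomial form $\Phi_n=R\tilde\phi_{n-1}p_n-S\phi_{n-1}\tilde p_n-Tp_n\tilde p_n$; the analogous step for $\Xi_n$ gives $\Xi_n=a_nR\tilde\phi_{n-1}p_{n-1}-a_nS\phi_{n-2}\tilde p_n-a_nTp_{n-1}\tilde p_n$. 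You already name \eqref{epsdef} and \eqref{ftilde} as the mechanism, so just make that computation explicit rather than deferring to asymptotics.
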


\begin{proof}
Using \eqref{ftilde} and \eqref{epsdef}, 
\[
R\left(\frac{\tilde{\phi}_{n-1}}{\tilde{p}_n} +  \frac{\tilde{\epsilon}_n}{\tilde{p}_n}\right) = S\left( \frac{\phi_{n-1}}{p_n} + \frac{\epsilon_{n}}{p_n}\right) + T,
\]
hence, we may define the polynomial, $\Phi_n$, to be 
\[
Rp_n\tilde{\phi}_{n-1} - S\phi_{n-1} \tilde{p}_n - Tp_n\tilde{p_n} = S\epsilon_n\tilde{p}_n - R \tilde{\epsilon}_np_n = \Phi_n.
\]
By \eqref{pnident}, 
\[
(a_np_{n-1}\phi_{n-1} - a_np_n\phi_{n-2})\Phi_n = Rp_n\tilde{\phi}_{n-1} - S\phi_{n-1} \tilde{p}_n - Tp_n\tilde{p_n},
\]
hence, we define $\Xi_n$ via
\begin{subequations}
\begin{align}
\label{Xiline1}\phi_{n-1}p_n \Xi_n &= \phi_{n-1} \left[ a_np_{n-1} \Phi_n + S\tilde{p}_n \right],  \\
\label{Xiline2} &= p_n \left[ a_n\phi_{n-2}\Phi_n  + R\tilde{\phi}_{n-1} -T\tilde{p}_n \right].
\end{align}
\end{subequations}
The second line, \eqref{Xiline2}, is equivalent to the definition of $\tilde{p}_n$ from \eqref{Rdef}. Hence, to show $\tilde{p}_n$ is congruent with \eqref{Rdef}, we need only show that $\Xi_n$ is polynomial and that it is given by \eqref{defXi}. To show $\Xi$ is given by \eqref{defXi}, dividing \eqref{Xiline1} by $\phi_{n-1}p_n$ reveals
\[
\Xi_n = \frac{a_np_{n-1} \Phi_n}{p_n} + \frac{S\tilde{p}_n}{p_n},
\]
whereby using \eqref{defPhi} implies
\[
\Xi_n = \frac{Sa_np_{n-1}\epsilon_n\tilde{p}_n}{p_n} - R a_n\tilde{\epsilon}_np_{n-1} + \frac{S\tilde{p}_n}{p_n}.
\]
Using \eqref{pnident}, reduces the above to \eqref{defXi}. To see $\Xi_n$ is a polynomial, dividing \eqref{Xiline2} by $p_n\phi_{n-1}$ shows
\begin{align}
\Xi_n &= \frac{a_n\phi_{n-2}\Phi_n}{\phi_{n-1}}  + \frac{R\tilde{\phi}_{n-1}}{\phi_{n-1}} -\frac{T\tilde{p}_n}{\phi_{n-1}},\\
&= \frac{a_n\phi_{n-2}}{\phi_{n-1}} \left[Rp_n\tilde{\phi}_{n-1} - S\phi_{n-1} \tilde{p}_n - Tp_n\tilde{p_n} \right]  + \frac{R\tilde{\phi}_{n-1}}{\phi_{n-1}} -\frac{T\tilde{p}_n}{\phi_{n-1}},
\end{align} 
where use of \eqref{pnident} reduces this to
\[
\Xi_n = a_nRp_{n-1}\tilde{\phi}_{n-1} - a_nS\phi_{n-2}\tilde{p}_n - a_nTp_{n-1}\tilde{p}_n.
\]
To see that $\epsilon_n/w$ satisfies the same $q$-difference equation, note that
\begin{align*}
\tilde{f}\tilde{p}_n &= \tilde{\phi}_{n-1} + \tilde{\epsilon}_n\\
\tilde{f}\left[ \frac{\Xi_n}{S} p_n  - \frac{a_n\Phi_np_{n-1}}{S}\right]&= \left[\frac{S}{R}f + \frac{T}{R} \right]\left[ \frac{\Xi_n}{S} p_n  - \frac{a_n\Phi_np_{n-1}}{S}\right]\\
&= \frac{\Xi_n}{R} (\phi_{n-1} + \epsilon_n) + \frac{a_nT\Phi_n}{RS}(\phi_{n-2} + \epsilon_{n-1}).
\end{align*}
Seeing as \eqref{Xiline2} defines $\tilde{\phi}_{n-1}$ in terms of $\tilde{p}_n$, which is given by \eqref{Rdef}, the remaining terms give
\[
\tilde{\epsilon}_n = \frac{\Xi_n}{R}\epsilon_n - \frac{a_n\Phi_n}{R}\epsilon_{n-1},
\]
hence, $\epsilon_n/w$ satisfies 
\[
\frac{\tilde{\epsilon}_n}{\tilde{w}_n} = \frac{\Xi_n}{S}\frac{\epsilon_n}{w} - \frac{a_n\Phi_n}{S}\frac{\epsilon_{n-1}}{w}.
\]
The second line, which defines $\tilde{p}_{n-1}$ and $\tilde{\epsilon}_{n-1}/\tilde{w}$, is obtained from \eqref{ortho:3termrecurrence}.
\end{proof}

In the same manner as theorem \ref{thm:Dqxpn}, the terms $p_n$ and $\epsilon_n$ appear together, hence, the degree of the polynomials, $\Xi_n$ and $\Phi_n$, have bounds on their degree in $x$ given by
\begin{subequations}
\begin{align}
\deg_x \Xi_n \leq& \max(\deg_x R-1 , \deg_x S-1,0), \\
\deg_x \Phi_n \leq& \max(\deg_x R-2 , \deg_x S). 
\end{align}
\end{subequations}
This means that $\Xi_n$ and $\Phi_n$ may be computed explicitly in terms of the $a_i$'s and $b_i$'s using \eqref{ortho:largexexpansion}.

The above system is subject to compatibility with all the previously established recurrences. We note that \eqref{DetPsi} implies
\begin{equation}\label{letR}
\det \mathscr{R}_n = \frac{wa_n}{\tilde{w}\tilde{a}_n} = \frac{a_nR}{\tilde{a}_nS}.
\end{equation}
We shall explore some options for particular choices of deformation. This above is a generalization of the theorem that appeared in \cite{OrmerodForresterWitte}, however, this type of deformation has a deeper history\cite{Christofell, Ismail:book, Uvarov}. 

\section{The Big $q$-Laguerre Polynomials.}

The big $q$-Laguerre polynomials and $q$-$\mathrm{P}_{\mathrm{V}}$ are degenerations of the big $q$-Jacobi polynomials and $q$-$\mathrm{P}_{\mathrm{VI}}$ respectively. We shall establish that a simple generalization of the big $q$-Laguerre polynomials forms a column vector solution to a special case of the associated linear problem for $q$-$\mathrm{P}_{\mathrm{V}}$. The monic versions of the big $q$-Laguerre polynomials defined as
\begin{align}\label{Laguerredef}
P_n =& {}_3 \phi_2 \left(\begin{array}{c |} q^{-n},0,x\\ aq, bq \end{array} \hspace{.1cm} q;q  \right) \\
=& \frac{1}{(b^{-1}q^{-n};q)_n} {}_2\phi_1 \left( \begin{array}{c|} q^{-n}, aqx^{-1} \\ aq \end{array} \hspace{.1cm} q;\frac{x}{b}\right),\nonumber
\end{align}
are orthogonal with respect to the linear form
\begin{equation}\label{bigqLagurreLinearForm}
L(f) = \int_{bq}^{aq}w(x,a,b;q)f(x)\mathrm{d}_q x,
\end{equation}
where
\begin{equation}\label{weight}
w(x,a,b;q) = \frac{\left( \frac{x}{a},\frac{x}{b};q\right)_\infty}{(x;q)_\infty }.
\end{equation}
We generalize these polynomials in a way that preserves the structure of the linear problem these polynomials satisfy. This contrasts the approach in \cite{OrmerodForresterWitte} where the degree of the recursion relation the moments satisfy is altered in a manner that introduces a parameter, $t$. However, in this framework, we have no $t$ parameter, only translations on a lattice represented by bi-rational transformations. 

From the orthogonal polynomial viewpoint, there are some simple generalizations of these polynomials that allow us to extrapolate extra variables, for example, if we include a variable associated with a scaling of $x$. By considering polynomials specified by \eqref{ortho:linearform} and a weight function of
\begin{equation}\label{genweight}
w(x) = \frac{x^{\sigma}\left( \frac{x}{a_1},\frac{x}{a_3} ;q\right)_{\infty}}{\left(\frac{x}{a_2};q\right)_{\infty}}.
\end{equation}
We choose a support in which the endpoints generalize \eqref{bigqLagurreLinearForm}, hence we write the orthogonality condition explicitly as
\begin{equation}\label{P5:orthogonality}
L(p_i(x)p_j(x)) = \int_{qa_1}^{qa_3} w(x)p_i(x)p_j(x) \mathrm{d}_q x = \delta_{ij}.
\end{equation}
This condition specifies the polynomials completely. We identify the moment integral,
\[
\mu_k = \int_{qa_1}^{qa_2} x^k w(x)\mathrm{d}_q x = \int_{qa_1}^{qa_2} x^{k+\sigma} \frac{\left( \frac{x}{a_1},\frac{x}{a_2} ; q\right)_{\infty}}{\left( \frac{x}{a_3}; q\right)_{\infty}} \mathrm{d}_q x,
\]
as a special case of \eqref{hyperint} where $c = 0$. Hence, we may write
\begin{align}
\label{moments}\mu_k =& \frac{(1-q)(q;q)_{\infty}}{(q^{\sigma+k + 1};q)_\infty} \left[(qa_3)^{\sigma+k+1} {}_2\phi_1 \left( \begin{array}{c |} \frac{a_3}{a_1} , q^{\sigma+k+1} \\ 0 \end{array} \hspace{.1cm}q;\frac{qa_2}{a_1} \right) \right. \\
&-\left. (qa_1)^{\sigma+k+1} {}_2\phi_1 \left( \begin{array}{c |} \frac{a_3}{a_2} , q^{\sigma+k+1} \\ 0 \end{array} \hspace{.1cm}q;\frac{qa_1}{a_2} \right) \right].\nonumber
\end{align}
Note that when $\sigma$ is a negative integer, \eqref{moments} truncates, leaving a rational function.

We shall denote these generalized polynomials be
\[
p_n(x) = p_n\left(\begin{array}{c} a_1, a_2\\a_3, q^\sigma \end{array} , x \right),
\]
where the relation to the big $q$-Laguerre is given by
\[
P_n(x) = \frac{1}{\rho_n}p_n\left(\begin{array}{c} a, 1\\ b, 1 \end{array} , x \right).
\]
A simple calculation reveals
\[
D_{q,x} w(x) = \frac{ (x-a_1)(x-a_2)a_3 + q^{\sigma} a_1a_2(x-a_3)}{x(1-q)(x-a_1)(x-a_2)a_3}w(x),
\]
which specifies spectral data polynomials as
\begin{subequations}\label{P5:spectraldata}
\begin{align}
W &= x(1-q)(x-a_1)(x-a_3)a_2,\\
2V &= (x-a_1)(x-a_3)a_2 + q^{\sigma}a_1a_3(x-a_2).
\end{align}
\end{subequations}
The use of \eqref{thm:defThetaOmega} and \eqref{ortho:largexexpansion} gives us that $\deg \Omega_n = 3$ and $\deg \Theta_n = 2$, hence we let
\begin{subequations}
\begin{align}
\Omega_n =& \frac{a_2}{2}x^2 - \frac{1}{2}x(a_2a_3 + a_1(a_2+a_3q^{\sigma}(2q^n-1))) \\
& + \frac{1}{2} q^{-n-1} \left(a_1 a_3 q^n \left(a_2 q \left(q^{\sigma }+1\right)-2 q^{n+\sigma } \left((1-q) \Gamma_n +a_2 q\right)\right)+2 a_2 a_n^2\right) \nonumber \\
\Theta_n =& \frac{a_2}{q^{n+1}}x + q^{n+\sigma}a_1a_3 - \frac{a_2}{q^{2+n}}\left(q\Gamma_{n} + qa_1 + qa_3-\Gamma_{n+1} \right)
\end{align}
\end{subequations}
where 
\[
\Gamma_n = -\frac{\rho_{n,1}}{\rho_n} = \sum_{i=0}^{n-1} b_i.
\]
Given the above spectral data polynomials and values of $\Omega_n$ and $\Theta_n$, may take $L_n$ to be of the form
\[
L_n = \frac{L_{0,n} + L_{1,n}x + L_{2,n}x^2}{x-a_3}.
\]
Using \eqref{ortho:con}, we find that
\begin{equation}\label{P5:detA}
\det L_n = \frac{a_3(x-a_1)(x-a_2)}{a_1a_2q^{\sigma}(x-a_2)}.
\end{equation}
We now preform a gauge transformation that will relate the system of $q$-difference equations satisfied by big $q$-Laguerre polynomials to a special case of the associated linear problem for $q$-$\mathrm{P}_\mathrm{V}$. The gauge transformation will be of the form
\[
Y_n(x)= \frac{1}{\left(\frac{x}{a_2};q \right)_{\infty}}\begin{pmatrix}\frac{1}{\rho_n}& 0 \\ 0 & \frac{1}{\rho_{n-1}} \end{pmatrix} \Psi_n,
\]
so that $Y_n$ satisfies 
\[
Y(qx) = A_n(x) Y_n(x),
\]
where $A_n(x)$ is of the form
\begin{equation}
A_n = A_{0,n} + A_{1,n}x + A_{2,n}x^2.
\end{equation}
The determinant of $A_n$, from this transformation, is given by
\begin{equation}\label{detAp}
\det A_n = -\frac{(x-a_1)(x-a_2)(x-a_3)}{q^{\sigma}a_1a_2a_3},
\end{equation}

It is a simple exercise write the matrix representation
\begin{align*}
A_{2,n} =& \begin{pmatrix} 0 & 0 \\ 0 & \kappa_2 \end{pmatrix},\\
A_{1,n} =& \begin{pmatrix} \kappa_1 & \displaystyle \frac{\kappa_2a_n^2}{q} \\ -\kappa_2 & \displaystyle \frac{\kappa _2 (1-q) \Gamma_n}{q} -a_1 - a_3 \end{pmatrix},
\end{align*}
where
\begin{align}
\kappa_1 &= -\frac{q^n}{a_2},\\
\kappa_2 &= \frac{1}{a_1a_3q^{n+\sigma}}.
\end{align}
Before stating the value of $A_{0,n}$, it is useful to evaluate the $A_{0,n}$'s eigenvalues, $\lambda_{1,n}$ and $\lambda_{2,n}$. This will allow us to simplify the resulting expression for $A_{0,n}$. Firstly, note that the determinant of $A_{0,n}$, as found by letting $x=0$ in \eqref{detAp}, is
\[
\lambda_{1,n} \lambda_{2,n} = q^{-\sigma}.
\]
Since $W$ is divisible by $x$, the trace of $A_0$ in terms of $W$, $V$, $\Omega_n$ and $\Theta_n$ is
\[
\mathrm{tr} A_{0,n} = \lambda_{1,n} + \lambda_{2,n} = \Omega_n(0)  + \Theta_{n-1}(0)b_{n-1} + \Omega_{n-1}+ V(0).
\]
However, by examining $x=0$ in \eqref{Frued2}, we this expression to see
\[
(\lambda_{1,n} + \lambda_{2,n}) - (\lambda_{1,n+1} +\lambda_{2,n+1}) = 0.
\]
Hence, we may determine the eigenvalues of $A_{0,n}$ by considering the initial values of $A_{0,n}$, say $A_{0,1}$, in terms of the $\mu_k$. Using this and the recurrence from \eqref{ortho:recurrencemoments}, 
\[
a_2\mu _{k+1}=q \left(a_1 a_2 a_3 \mu _{k-1} q^{k+\sigma +1}-a_1 a_3 \mu _k q^{k+\sigma +1}-a_1 a_2 a_3 q \mu _{k-1}+a_1 a_2
   \mu _k+a_2 a_3 \mu _k\right)
\]
for $k > 1$ tells us
\[
\mathrm{tr} A_{0,n} = 1+ q^{-\sigma},
\]
hence, 
\begin{align}
\lambda_{1,n} &= 1,\\
\lambda_{2,n} &= q^{-\sigma}.
\end{align}

\begin{rem}
We suppose the linear problem for the orthogonal polynomial system, \eqref{ortho:linearx}, admits solutions of the form given by \eqref{expansionYinf} and \eqref{expansionY0}. Since, by construction, \eqref{ortho:linearx} admits a polynomial solution of degree $n$, for this to coincide with a solution of the form \eqref{expansionYinf}, one $\kappa_i$ must be $q^{-n}$ as $e_{q^{-n},q}(x) \propto x^n$. Similarly, either $\lambda_1$ or $\lambda_2$ must be $1$ to admit polynomial solutions as $e_{1,q}(x)$ is constant in $x$. 
\end{rem}

This allows us to write $A_{0,n}$ as
\[
A_{0,n} = \begin{pmatrix}
 r_n & a_n^2 s_n \\
 -\frac{\left(r_n-\lambda_{1,n}\right) \left( r_n-\lambda_{2,n}\right)}{a_n^2 s_n} & \lambda_{1,n} + \lambda_{2,n} -r_n
\end{pmatrix},
\]
where we have introduced variables
\begin{align*}
r_n &= \frac{(q-1) \kappa _1 \Gamma _n}{q}-\frac{\kappa _2 a_n^2+a_2 q \kappa _1}{q}, \\
s_n & =\frac{\kappa _2 \left(b_n-(q-1) \Gamma _n - qa_1-qa_3  \right)}{q^2}-\kappa _1,
\end{align*}
for convenience.

We now state the change of variables that relates the coefficient matrix for $Y_n(x)$ with the coefficient matrix for $Y(x)$. We note that in making this correspondence, the values of $w$, $y$ and $z$ must depend on $n$, hence we write $w = w_n$, $y = y_n$ and $z = z_n$. These values, written in terms of the $a_i$ and $b_i$, are
\begin{subequations}
\begin{align}
y_n &=a_1+a_3-\frac{b_n+\Gamma _n}{q}+\Gamma _n+\frac{q \kappa _1}{\kappa _2}, \\
w_n &= \frac{a_n^2}{q}, \\
\label{z1}z_{1,n} &=a_1-a_2+a_3-\frac{b_n+2 \Gamma _n}{q}+2 \Gamma _n+\frac{q \kappa _1}{\kappa _2}-\frac{w \kappa _2}{\kappa _1}, \\
\label{P5:zconstraint}z_{1,n}z_{2,n} &= (y_n - a_1)(y_n-a_2)(y_n - a_3),
\end{align}
\end{subequations}
with a corresponding factorization of \eqref{P5:zconstraint} given by
\begin{align}
z_{1,n} &= \frac{\left(a_1-y_n\right) \left(a_2-y_n\right)}{z_n},\\
z_{2,n} &= \left(y_n-a_3\right) z_n.
\end{align}
The above identification means $z_n$ in terms of $\Gamma_n$, $a_n$ and $b_n$. We have identified that the $q$-difference equation satisfied by the Big $q$-Laguerre polynomials is a special case of the associated linear problem for $q$-$\mathrm{P}_\mathrm{V}$. Conversely, we may express the quantities, $a_n^2$, $b_n$ and $\Gamma_n$ as
\begin{align}
a_n^2 &= q w_n\\
b_n &= \frac{q \left(z_n \left(\kappa _2 \kappa _1 \left(a_1+a_2+a_3-2 y_n\right)+q \kappa _1^2+w_n \kappa _2^2\right)\right)}{z_n \kappa _1 \kappa _2} + z_{1,n}\\
\Gamma_n &= \frac{q \left(\frac{w_n \kappa _2}{\kappa _1}-z_{1,n}-1\right)}{q-1}.
\end{align}

As naturally suggested by the previous sections, there is one choice of deformation which shall give rise to $q$-$\mathrm{P}_{\mathrm{V}}$. We consider the relation between polynomials associated with \eqref{genweight} and polynomials associated with
\begin{equation}\label{genweighttilde}
\tilde{w}(x) = \frac{x^{\sigma}\left( \frac{x}{qa_1},\frac{x}{a_3} ;q\right)_{\infty}}{\left(\frac{x}{qa_2};q\right)_{\infty}}.
\end{equation}
This gives us spectral data polynomials of
\begin{subequations}
\begin{align}
R(x) &= a_1(x-qa_2),\\
S(x) &= a_2(x-qa_1).
\end{align}
\end{subequations}
This data is sufficient to deduce the associated $\Phi_n$ and $\Xi_n$ as
\begin{align}
\Xi_n &= \frac{a_2 x \tilde{\rho}_n}{\rho _n}+\frac{a_2 \tilde{\rho}_n \left(\Gamma _n -\tilde{\Gamma}_n-qa_1\right)}{\rho _n}-\frac{a_1 \rho
   _n}{\tilde{\rho}_n},\\
\Phi_n &= \frac{a_2 \tilde{\rho} _n}{\rho _n}-\frac{a_1 \rho _n}{\tilde{\rho}_n}.
\end{align}
This reveals that the transformed equation is of the form
\[
R_n(x) = \frac{R_{1,n}x + R_{2,n}x}{(x-qa_1)(x-qa_2)},
\]
where $R_1$ is diagonal. However, we note that by construction, the leading behavior of a column solution is given by
\[
\frac{1}{\left( \frac{x}{qa_2};q\right)_\infty} \begin{pmatrix} x^n + O(x^{n-1}) \\ x^{n-1} + O(x^{n-2}) \end{pmatrix} =\ \frac{ R(x) }{\left( \frac{x}{a_2};q\right)_\infty} \begin{pmatrix} x^n + O(x^{n-1}) \\ x^{n-1} + O(x^{n-2}) \end{pmatrix} .
\]
This forces $R_{1,n}$ to be $qa_2I$. The remaining entries are simply determined from the correspondence between $\Gamma_n$, $a_n^2$ and $b_n$ with $y_n$, $z_n$ and $w_n$. We may divide $R_n(x)$ by $qa_2$ to obtain an equivalent representation to that of \eqref{Rmat} and \eqref{P5:B0}. The evolution of $y_n$ and $z_n$ are then determined by theorem \ref{thm:qP5}.

We note that this deformation of polynomials is not constrained to the above deformation. We may decompose this deformation into a group of deformations in much the same manner. This implies a certain structure inherent in the weight. Each deformation is equivalent to a deformation of the connection data. This is outlined in table \ref{tableconnection}.

\begin{table}[!ht]
\begin{tabular}{|c|c|c|c|c|}
\hline
 & $R$ & $S$ & $\tilde{w}$ & $\tilde{M}$ \\ \hline \hline
$T_\sigma$ & $1$ & $x$ &  $\displaystyle \frac{x^{\sigma+1}\left( \frac{x}{a_1},\frac{x}{a_3} ;q\right)_{\infty}}{\left(\frac{x}{a_2};q\right)_{\infty}}$ &
$\left\{ \begin{array}{c c c c} a_1 & a_2 & a_3 & \\ \kappa_1 & \frac{\kappa_2}{q} & \frac{\lambda_1}{q} & \lambda_2 \end{array}\right\}$ \\ \hline
$T_{a_1}$ & $1$ & $\left( 1- \frac{x}{qa_1}\right)$  &  $\displaystyle \frac{x^{\sigma}\left( \frac{x}{qa_1},\frac{x}{a_3} ;q\right)_{\infty}}{\left(\frac{x}{a_2};q\right)_{\infty}}$ & $\left\{ \begin{array}{c c c c} q a_1 & a_2 & a_3 & \\ \kappa_1 & \frac{\kappa_2}{q} & \lambda_1 & \lambda_2 \end{array}\right\}$ \\ \hline
$T_{a_3}$ & $1$ &$\left( 1- \frac{x}{qa_3}\right)$ &  $\displaystyle \frac{x^{\sigma}\left( \frac{x}{a_1},\frac{x}{qa_3} ;q\right)_{\infty}}{\left(\frac{x}{a_2};q\right)_{\infty}}$ & $\left\{ \begin{array}{c c c c} a_1 & a_2 & qa_3 & \\ \kappa_1 & \frac{\kappa_2}{q} & \lambda_1 & \lambda_2 \end{array}\right\}$ \\ \hline
$T_{a_2}$ & $\left( 1- \frac{x}{qa_2}\right)$ & $1$ & $\displaystyle \frac{x^{\sigma}\left( \frac{x}{a_1},\frac{x}{a_3} ;q\right)_{\infty}}{\left(\frac{x}{qa_2};q\right)_{\infty}}$ & $\left\{ \begin{array}{c c c c} a_1 & qa_2 & a_3 & \\ \frac{\kappa_1}{q} & \kappa_2 & \lambda_1 & \lambda_2 \end{array}\right\}$ \\ \hline
\end{tabular} 
\caption{A table outlining the affect of changing the weight by some rational factor on the associated monodromy data.}
\label{tableconnection}
\end{table}
The specification of the polynomials, $R$ and $S$, completely specifies the evolution. Note that we have one more translation available to us, namely the translational component that defines the $n \to n+ 1$ evolution, specified by \eqref{ortho:3termrecurrence}. This is not given by the specification of a set of polynomials, $R$ and $S$, but has the effect of keeping the weight constant, and 
\[
T_n : \left\{ \begin{array}{c c c c} a_1 & a_2 & a_3 & \\ \kappa_1 & \kappa_2 & \lambda_1 & \lambda_2 \end{array}\right\} \to \left\{ \begin{array}{c c c c} a_1 & a_2 & a_3 & \\ q\kappa_1 & \frac{\kappa_2}{q} & \lambda_1 & \lambda_2 \end{array}\right\}.
\]
Furthermore, by specifying that $\lambda_2 = 1$ remains constant, this condition is equivalent to the quotient group seen in \S 2. We now may specify the correspondence between the transformations of \S 2 and the deformations of the polynomial:
\begin{align*}
T_0 &= T_{a_2} \circ T_{a_3}, \\ 
T_1 &= T_{a_1} \circ T_{a_1}^{-1} \circ T_{a_2}^{-2} \circ T_{a_3}^{-1} \circ T_{n}^{-1},\\
T_2 &= T_{\sigma} \circ T_n^{-1},\\
T_3 &= T_{a_1}\circ T_{a_2},\\
T_4 &= T_0^{-1}\circ T_1^{-1} \circ T_2^{-1} \circ T_3^{-1} = T_{a_1} \circ T_{a_2}.
\end{align*}
We also note that each of these combinations of weight deformations and recurrences that appear on the right hand sides must be a copy evolution of $q$-$\mathrm{P}_{\mathrm{V}}$ since the Dynkin diagram automorphism may be used to express any of translational components in terms of just one translational component in $\tilde{W}\left(A_4^{(1)}\right)$. 

Finally, we remark that we may use the above framework to solve $q$-$\mathrm{P}_\mathrm{V}$ in terms of Hankel determinants
\begin{subequations}
\begin{align}
y_n &=a_1+a_3-\frac{\Sigma_{n+1}}{q\Delta_{n+1}}+\frac{\Sigma_n}{\Delta_n}+\frac{q \kappa _1}{\kappa _2}, \\
z_n &=\frac{q \kappa _1 \Delta _n^2 \left(y_n-a_1\right) \left(y_n-a_2\right)}{\kappa _1 \Delta _n \left( q\Delta _n( y_n-a_2) +(q-1) \Sigma
   _n\right)-\kappa _2 \Delta _{n-1} \Delta _{n+1}}, 
\end{align}
\end{subequations}
where $\Delta_n$ and $\Sigma_n$ are given by \eqref{ortho:delta} and \eqref{ortho:sigma} repsectively, $\mu_k$ is given by \eqref{moments}.

\section{Discussion}

The paper hopefully sheds some light on the correspondence between the symmetries of the $q$-Painlev\'e equations and their associated linear problems. This work also shows that there is an intimate connection between the basic hypergeometric and rational solutions of the $q$-Painlev\'e equations and the associated linear functions admitting polynomial solutions. We have briefly touched on the asymptotic form of solutions of irregular systems of $q$-difference equations, yet this work exhibits all the characteristics required to pursue a similar analysis for other associated linear problems. For those working on the derivation of $q$-Painlev\'e equations from an orthogonal polynomial system, this work should provide some insight into what combination of transformations of the weight and recurrences in $n$ give rise to known discrete Painlev\'e equations.

\section*{Acknowledgments}

I would like to thank Peter Forrester for useful discussions and suggestions.

\bibliography{refs}{}
\bibliographystyle{abbrv}

\end{document}